\theoremstyle{plain}
\newtheorem{lem}{Lemma}[section]
\newtheorem{theo}[lem]{Theorem}
\newtheorem{prop}[lem]{Proposition}
\font\k=cmr8
\font\rm=cmr12
  \renewcommand {\ln}{\log}
  \newcommand {\rp}{\rho}
  \newcommand {\rep}{\chi}
  \newcommand {\pr}{\mbox{\k prime}}
  \newcommand {\free}{\mbox{\k free}}
  \newcommand {\tors}{\mbox{\k tors}}
  \newcommand {\C}{{\mathbb C}}
  \newcommand {\bH}{{\mathbb H}}
  \newcommand {\N}{{\mathbb N}}
  \newcommand {\R}{{\mathbb R}}
  \newcommand {\Z}{{\mathbb Z}}
  \newcommand {\Q}{{\mathbb Q}}
  \newcommand {\F}{{\mathbb F}}
  \newcommand {\ho}{{\mathfrak o}}
  \newcommand {\pg}{{\mathfrak p}}
  \newcommand {\cO}{{\mathcal O}}
\newcommand {\cV}{{\mathcal V}}
\newcommand {\cM}{{\mathcal M}}
 \newcommand {\cH}{{\mathcal H}}
\newcommand {\cT}{{\mathcal T}}
\newcommand{\cX}{{\mathcal X}}
\newcommand {\bs}{\backslash}
\newcommand {\ol}{\overline}
\newcommand {\bG}{{\bf G}}
\renewcommand{\Re}{\operatorname{Re}}
\newcommand{\Tr}{\operatorname{Tr}}
\newcommand{\Spec}{\operatorname{Spec}}
\newcommand{\End}{\operatorname{End}}
\newcommand{\I}{\operatorname{I}}
\newcommand{\rk}{\operatorname{rank}}
\newcommand{\vol}{\operatorname{vol}}
\newcommand{\SL}{\operatorname{SL}}
\newcommand{\GL}{\operatorname{GL}}
\newcommand{\SU}{\operatorname{SU}}
\newcommand{\PGL}{\operatorname{PGL}}
\newcommand{\Gal}{\operatorname{Gal}}
\newcommand{\Aut}{\operatorname{Aut}}
\renewcommand{\det}{\operatorname{det}}
\newcommand{\Sym}{\operatorname{Sym}}
\newcommand{\Int}{\operatorname{Int}}
\newcommand{\ord}{\operatorname{ord}}
\newcommand{\ra}{\operatorname{rk}}
\begin{document}
\title[]
{On  the torsion in the cohomology of arithmetic hyperbolic 3-manifolds}
\date{\today}

\author{Simon Marshall}
\address{The Institute for Advanced Study\\
Einstein Drive\\
Princeton\\
NJ 08540, USA}
\email{slm@math.princeton.edu}

\author{Werner M\"uller}
\address{Universit\"at Bonn\\
Mathematisches Institut\\
Beringstrasse 1\\
D -- 53115 Bonn, Germany}
\email{mueller@math.uni-bonn.de}

\begin{abstract}
In this paper we consider the cohomology of a closed arithmetic hyperbolic
$3$-manifold with coefficients in the local system defined by the even 
symmetric powers of the standard representation of $\SL(2,\C)$. The cohomology
is defined over the integers and is a finite abelian group. 
We show that the order of
the 2nd cohomology grows exponentially as the local system grows. We also 
consider the twisted Ruelle zeta function of a closed arithmetic hyperbolic 
$3$-manifold and we express the leading coefficient of its Laurent expansion 
at the origin in terms of the orders of the torsion subgroups of the cohomology.
\end{abstract}

\maketitle
\setcounter{tocdepth}{1}
\section{Introduction}

Let $\bG$ be a semi-simple connected algebraic group over $\Q$, $K$ a 
maximal compact  subgroup of its group of real points $G=\bG(\R)$ and $S=
G/K$ the associated Riemannian symmetric space. Let $\Gamma\subset \bG(\Q)$
be an arithmetic subgroup and $X=\Gamma\bs S$ the corresponding locally
symmetric space. Let $\rho\colon \bG\to \GL(V)$ be a rational representation 
of $\bG$ on a $\Q$-vector space $V$. Then there exists a lattice $M\subset V$
which is stable under $\Gamma$. Let $\cM$ be the associated local system 
of free $\Z$-modules over $X$ defined by the $\Gamma$-module $M$ and let
$H^*(X,\cM)$ be the corresponding sheaf cohomology. 
Since $X$ has the homotopy type of a finite
$CW$-complex, $H^*(X,\cM)$ are finitely generated abelian groups. The 
cohomology of arithmetic groups has important connections to the theory of 
automorphic forms and number theory \cite{Sch}. In this respect,
$H^*(X,\cM\otimes\C)$ has been studied to a great extent. Much less is known 
about the torsion subgroup $H^*(X,\cM)_{\tors}$. In a recent paper, \cite{BV}
Bergeron and Venkatesh  studied the growth of $|H^j(\Gamma_N\bs S,\cM)_{\tors}|$
as $N\to \infty$, where $\{\Gamma_N\}$ is a decreasing sequence of normal 
subgroups of finite index of $\Gamma$ with trivial intersection. This is 
motivated by conjectures that
torsion classes in the cohomology of arithmetic groups should have arithmetic
significance \cite{AS}, \cite{ADP}.  In this paper we consider a similar 
problem in a different aspect. We fix the discrete group and vary the local 
system. We also restrict
attention to the case of hyperbolic 3-manifolds. However, we expect that the
results hold in greater generality.

 Now we explain our results in more detail.
Let $\bH^3=\SL(2,\C)/\SU(2)$ be the $3$-dimensional hyperbolic space and
let $\Gamma\subset \SL(2,\C)$ be a discrete torsion free co-compact subgroup.
Then $X=\Gamma\bs\bH^3$ is a compact oriented hyperbolic $3$-manifold.
We are interested in arithmetic subgroups $\Gamma$ which are  derived from a 
quaternion division algebra $D$ over an imaginary quadratic number field $F$.
The division algebra $D$ determines an algebraic group $\SL_1(D)$ over $F$
which is an inner form of $\SL_2/F$. Moreover, the group of its $F$-rational
points $\SL_1(D)(F)$ is equal to 
\[
D^1:=\{x\in D\colon N(x)=1\},
\]
where $N(x)=x\overline x$ denotes the norm of $x\in D$. Let $\ho\subset D$ be 
an order in $D$ and let $\ho^1=\ho\cap D^1$. Then $\ho^1$ is a discrete subgroup
of $D^1=\SL_1(D)(F)$. The quaternion division algebra $D$ splits over $\C$, 
i.e., there is an isomorphism $\varphi\colon D\otimes_F \C\to M_2(\C)$ of
$\C$-algebras. Let $\Gamma:=\varphi(\ho^1)$. Then $\Gamma$ is a cocompact
arithmetic subgroup of $\SL(2,\C)$. For $n\in\N$ let $V(n)=S^n(F^2)$ be the
$n$-th symmetric power of $F^2$ and let $\Sym^n$ be the $n$-the symmetric
power representation of $\SL_2/F$ on $V(n)$. It follows from Galois descent
that for every even $n$ there is a rational representation  
\[
\mu_n\colon \SL_1(D)/F\to \GL(V(n))
\]
which is equivalent to $\Sym^n$ over $\overline F$. Using this 
representation it follows that for each even $n$ there is a lattice 
$M_n\subset V(n)$ which is stable under $\Gamma$ with respect to $\Sym^n$.  
Let $\cM_n$ be the associated local system of free $\Z$-modules over $X$. Then 
$H^*(X,\cM_n\otimes\R)=0$. Thus $H^*(X,\cM_n)$ is a torsion group. Let
$|H^p(X,\cM_n)|$ denote the order of $H^p(X,\cM_n)$. The purpose of this
paper is to study the behavior of $\ln |H^p(X,\cM_n)|$ as $n\to\infty$.
Our main result is the following theorem.
\begin{theo}\label{thm-asymp}
For every choice of $\Gamma$-stable lattices $M_{2k}$ in $S^{2k}(\C^2)$
we have
\begin{equation}\label{asymp1}
\lim_{k\to\infty}\frac{\ln|H^2(X,\cM_{2k})|}{k^2}=\frac{2}{\pi}\vol(X).
\end{equation}
 Furthermore, for $p=1,3$ we have
\begin{equation}\label{estim1}
\ln |H^p(X,\cM_{2k})|\ll k\ln k
\end{equation}
uniformly over all choices of lattices $M_{2k}$.
\end{theo}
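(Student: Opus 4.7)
The strategy combines the Cheeger--M\"uller theorem with the asymptotics of Ray--Singer analytic torsion for the strongly acyclic family $\{\Sym^{2k}\}$, together with a separate uniform bound on the torsion of $H^1$.

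Since $\Gamma$ is Zariski dense in $\SL_2$ and $V(2k)$ is irreducible and nontrivial for $k\ge 1$, $M_{2k}$ has no $\Gamma$-invariants, so $H^0(X,\cM_{2k})=0$. Strong acyclicity of $\Sym^{2k}$ on $\bH^3$ yields $H^*(X,\cM_{2k}\otimes\R)=0$ and a uniform lower bound for the spectrum of the twisted Hodge Laplacian $\Delta_p$. The universal coefficient theorem applied to the cellular complex, combined with Poincar\'e duality on the closed oriented $3$-manifold $X$, identifies $|H^3(X,\cM_{2k})|$ with $|H^1(X,\cM_{2k}^*)|$; since self-duality of $\Sym^{2k}$ over $\Q$ makes $\cM_{2k}^*$ another $\Gamma$-stable lattice in $V(2k)$, a uniform $H^1$-bound yields the corresponding $H^3$-bound automatically.

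In this acyclic setting, the Cheeger--M\"uller theorem gives
\[
\log T(X,\cM_{2k})=\log|H^2(X,\cM_{2k})|-\log|H^1(X,\cM_{2k})|-\log|H^3(X,\cM_{2k})|,
\]
with $T$ the Ray--Singer analytic torsion; both sides are independent of the choice of lattice thanks to $\chi(X)=0$. The main analytic input is the asymptotic
\[
\lim_{k\to\infty}\frac{\log T(X,\cM_{2k})}{k^2}=\frac{2}{\pi}\vol(X),
\]
which I would derive from the Selberg trace formula applied to the heat kernel on $\cM_{2k}$-valued forms. Strong acyclicity lets one interchange $t\to\infty$ with $k\to\infty$; only the identity orbital integral contributes to the leading term; and an explicit Plancherel computation on $\SL(2,\C)$ (using $\dim V(2k)=2k+1$) produces the constant $\tfrac{2}{\pi}\vol(X)$.

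The main obstacle is the uniform bound $\log|H^1(X,\cM_{2k})|\ll k\log k$. A finite CW structure on $X$ gives cochain groups $C^p(X,\cM_{2k})$ of $\Z$-rank $O(k)$. The generators of $\Gamma$ act on $M_{2k}$ via $\Sym^{2k}$ with operator norm $e^{O(k)}$, so a naive Hadamard bound on the determinants of the differentials only gives $\log|H^1|\ll k^2$. To sharpen this to $k\log k$ one uses that $C^*(X,\cM_{2k})\otimes\R$ is acyclic with a uniform spectral gap on the combinatorial Laplacian, obtained from the analytic gap by a Dodziuk--Patodi-type comparison: every integer $1$-cocycle $z$ then admits a primitive $\eta\in C^0(X,\cM_{2k})\otimes\Q$ with $z=\partial\eta$ and $\|\eta\|\le \mathrm{poly}(k)\|z\|$, bounding each of the $O(k)$ Smith invariants of $H^1$ by $\mathrm{poly}(k)$ and hence yielding $\log|H^1|\ll k\log k$ uniformly in the lattice.
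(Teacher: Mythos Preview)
Your treatment of the alternating sum via Cheeger--M\"uller and the analytic-torsion asymptotic is correct and matches the paper (which quotes \cite[Corollary~1.2]{Mu2} for the latter). Your duality reduction of the $H^3$ bound to an $H^1$ bound for the dual lattice is also valid; the paper in effect runs it the other way, bounding $H^3\cong(M_{2k})_\Gamma$ directly and then deducing the $H^1$ bound from the long exact sequence attached to $0\to M\xrightarrow{\times d}M\to M/d\to 0$.

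The genuine gap is in your $H^1$ bound. Even granting a combinatorial spectral gap uniform in $k$ --- which a Dodziuk--Patodi comparison does not obviously supply, since that comparison concerns a \emph{fixed} local system as the mesh shrinks, not a fixed triangulation with varying $\cM_{2k}$ --- the inference from $\|\eta\|\le\mathrm{poly}(k)\,\|z\|$ to ``each Smith invariant of $H^1$ is $\mathrm{poly}(k)$'' is unjustified. A norm bound on the rational primitive $\eta$ says nothing about the \emph{denominators} of its coordinates in a $\Z$-basis of $C^0$: the lattice $M_{2k}$ may sit very skew inside $(V(2k),h)$, and the skewness grows with $k$. Spectral data control the Reidemeister torsion, i.e.\ the \emph{alternating} product of the $|H^p|$, which is exactly why your analytic input yields the alternating sum; it cannot by itself isolate a single $|H^p|$. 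The paper's argument for \eqref{estim1} is instead purely arithmetic and local: one writes $\log|(M_{2k})_\Gamma|=\sum_\pg\log|(M_{2k,\pg})_\Gamma|$; primes with residue characteristic $>2k$ contribute nothing because $\Sym^{2k}$ is irreducible over the residue field (Lemma~\ref{largep}); at each remaining unramified $\pg$ one proves $\log_q|L:L'|\ll k/q$ for every $\SL_2(\cO_\pg)$-stable lattice $L$ (Proposition~\ref{local}) by using the diagonal torus to reduce to its one-dimensional fixed line $\langle (xy)^k\rangle$ and then the two unipotents to control that line; summing $k\log N\pg/N\pg$ over $N\pg\le 4k^2$ gives $O(k\log k)$, and the finitely many ramified primes are handled by a cruder variant.
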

Note that the left hand side of \eqref{asymp1} is a pure combinatorial 
invariant of $X$. So at 
first sight it looks surprising that the volume appears on the right hand side.
However, this is no contractiction, since we know by the Mostow-Prasad rigidity
theorem that the volume of a hyperbolic manifold is a topological invariant.

The proof of \eqref{asymp1} is a consequence of the following theorem combined 
with \eqref{estim1}.
\begin{theo}\label{thm-asymp2} 
The alternating sum of $\ln |H^p(X,\cM_{2k})|$ is independent 
of the choice of a $\Gamma$-stable lattice $M_{2k}$ in $S^{2k}(\C^2)$
and we have
\begin{equation}\label{asymp2}
\sum_{p=1}^3(-1)^p\ln |H^p(X,\cM_{2k})|=\frac{2}{\pi}\vol(X)k^2+O(k)
\end{equation}
as $k\to\infty$. 
\end{theo}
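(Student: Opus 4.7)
The plan is to relate the alternating sum in \eqref{asymp2} to the Ray--Singer analytic torsion of the flat vector bundle $E_{2k}:=\cM_{2k}\otimes_\Z\R$, and to compute the asymptotic of this torsion via the Selberg trace formula and Harish-Chandra's Plancherel formula for $G=\SL(2,\C)$.

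\emph{Step 1: Identification with analytic torsion.} The representation $\mu_{2k}$ lands in $\SL(V(2k))$, hence is unimodular, and the local system $\cM_{2k}$ is rationally acyclic by hypothesis. In this situation M\"uller's generalization of the Cheeger--M\"uller theorem to unimodular representations yields, for the given integral structure,
\begin{equation*}
\log T(X,E_{2k}) \;=\; \sum_{p=0}^{3}(-1)^p\log|H^p(X,\cM_{2k})|,
\end{equation*}
where $T(X,E_{2k})$ is the analytic torsion. The group $H^0$ vanishes for $k\ge 1$ since $\mu_{2k}$ has no nonzero $\Gamma$-invariants (by Zariski density of $\Gamma$ in $\SL(2,\C)$), so the sum collapses to the form in \eqref{asymp2}. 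The crucial observation is that $\log T(X,E_{2k})$ depends only on the flat bundle $E_{2k}$, which is determined by the real representation $\mu_{2k}$ alone; hence the alternating sum is independent of the choice of $\Gamma$-stable lattice $M_{2k}$, proving the first assertion.

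\emph{Step 2: Asymptotic of $\log T(X,E_{2k})$.} It remains to establish
\[
\log T(X,E_{2k}) \;=\; \tfrac{2}{\pi}\vol(X)\,k^2 + O(k).
\]
Choose an admissible Hermitian metric on $E_{2k}$. The analytic torsion is computed from the Mellin transforms of the heat traces $\Tr e^{-t\Delta_p(E_{2k})}$ on $p$-forms with values in $E_{2k}$. Applying the Selberg trace formula on $X$ decomposes the torsion as
\[
\log T(X,E_{2k}) \;=\; \vol(X)\cdot t_{2k}^{(I)} \;+\; t_{2k}^{(H)},
\]
an identity contribution and a sum over closed geodesics. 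Via Kuga's Lemma the twisted Hodge Laplacian on forms is expressed through the Casimir operator, so by the Plancherel formula for $\SL(2,\C)$ the identity term $t_{2k}^{(I)}$ reduces to an explicit integral against the density $\nu^2\,d\nu$ on the spherical principal series parameterized by $\nu\in\R$, weighted by the infinitesimal characters of $\mu_{2k}$. A direct asymptotic evaluation of this integral produces $t_{2k}^{(I)}=\tfrac{2}{\pi}k^2+O(k)$, the $\tfrac{2}{\pi}$ arising from the Plancherel normalization. The hyperbolic contribution $t_{2k}^{(H)}$, expressible via the logarithmic derivative at $0$ of a twisted Ruelle zeta function, is controlled by the uniform spectral gap enjoyed by the strongly acyclic family $\{E_{2k}\}$ on $\Lambda^*T^*X\otimes E_{2k}$, and one obtains $t_{2k}^{(H)}=O(1)$ in $k$.

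\emph{Main obstacle.} The technically most demanding part is the Plancherel-side asymptotic of $t_{2k}^{(I)}$: one must carry out the $(-1)^p p$-weighted spectral integrals and extract both the leading $\tfrac{2}{\pi}k^2$ and a uniform $O(k)$ remainder, which requires tracking subtle cancellations among the different form degrees. Controlling the hyperbolic side uniformly in $k$ depends on the uniform lower bound for the Laplace spectrum on twisted forms (strong acyclicity), which for symmetric-power coefficients on hyperbolic $3$-manifolds follows from Bergeron--Venkatesh-type estimates.
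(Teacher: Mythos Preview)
Your approach is essentially the paper's: the alternating sum is identified with (minus the log of) the Reidemeister torsion via Cheeger's formula (Proposition~\ref{reidem1}), which by the Cheeger--M\"uller theorem equals the analytic torsion, and the asymptotic of the latter is precisely \cite[Corollary~1.2]{Mu2}---whose proof is the trace-formula/Plancherel computation you sketch in Step~2. One point the paper handles explicitly and you elide: the lattice $M_{2k}$ sits in $S^{2k}(\C^2)$ viewed as a \emph{real} vector space, so one must pass to the underlying real representation $\rho_{2k}^\R$ and use $\tau_X(\rho_{2k}^\R)=\tau_X(\rho_{2k})^2$ (Lemma~\ref{realcompl}) before invoking the asymptotic for the complex representation; relatedly, watch the sign in your Step~1 identity, since Proposition~\ref{reidem1} gives exponents $(-1)^{p+1}$, so the sum in \eqref{asymp2} equals $-\log\tau_X(\rho_{2k}^\R)$ rather than $+\log T$.
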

The fact that $\sum_{p=1}^3(-1)^p\log|H^p(X,\cM_{2k})|$ is independent of the
choice of the $\Gamma$-stable lattice $M_{2k}$ follows from the proof 
of \eqref{asymp2}, but it can also be seen in more elementary way. See the 
remark following \eqref{reidem2}.

More generally for $m,n\in\N$ even, we may consider the local system 
\[
 \cM_{n,m}=\cM_n\otimes \overline \cM_m.
\]
where $\overline\cM_n$ is the local system attached to the complex conjugated 
lattice $\overline M_n$ of $M_n$. If $n\neq m$, then nothing changes. We still 
have $H^*(X,\cM_{n,m}\otimes \R)=0$. Thus for $n\neq m$,
$H^*(X,\cM_{n,m})$ is a torsion group and there is an asymptotic formula similar
to \eqref{asymp1} as $m\to\infty$ or $n\to\infty$. 

In \cite{BV} Bergeron and Venkatesh established results of similar nature 
but in a different aspect. They study the growth of the torsion in
the cohomology for a fixed local system as the lattice varies in a decreasing
sequence of congruence subgroups. Again the volume of the locally symmetric
space appears as the main ingredient of the asymptotic formulas.   

Our next result is related to the Ruelle zeta function of $X$. In \cite{De} 
Deninger discussed a geometric analogue of Lichtenbaum's conjectures in the 
context of Ruelle zeta functions attached to certain dynamical systems. We 
establish a result of similar nature for the Ruelle zeta function of a compact
 arithmetic hyperbolic $3$-manifold. 

The Ruelle zeta function is a 
dynamical zeta function attached to the geodesic flow on the unit 
tangent bundle  of $X$.  We recall its definition.
 Let $\rep\colon \Gamma\to \GL(V)$ be a representation on a 
finite-dimensional complex vector space. Given $\gamma\in\Gamma$, denote by 
$[\gamma]$ the $\Gamma$-conjugacy class of $\gamma$. For 
$\gamma\in \Gamma\setminus\{e\}$ let $\ell(\gamma)$ be the length of the 
unique closed geodesic that corresponds to $[\gamma]$. Then the twisted
Ruelle zeta function is defined as 
\begin{equation}\label{ruelle1}
R(s;\rep):=\prod_{\substack{[\gamma]\neq e\\\pr}}\det\left(\I-\rep(\gamma)
e^{-s\ell(\gamma)}\right)^{-1}.
\end{equation}
The product runs over all nontrivial primitive conjugacy classes. The infinite
product converges in some half-plane $\Re(s)>c$ and  admits a meromorphic 
extension to $\C$ \cite[Sect. 7]{Fr2}. We note that the definition 
\eqref{ruelle1} differs from the usual one by the exponent $-1$. 

We consider the special 
case where $\rep$ is the restriction to $\Gamma$ of a finite-dimensional
representation $\rp$ of $\SL(2,\C)$. We denote the associated twisted 
Ruelle zeta function by $R(s;\rp)$. Note that the irreducible 
finite-dimensional representations of $\SL(2,\C)$, regarded as real Lie group,
are given by 
\begin{equation}\label{irr-rep}
\rp_{m,n}:=\Sym^m\otimes\overline{\Sym^n}, \quad m,n\in\N,
\end{equation}
\cite[p. 32]{Kn}.
Our  interest is in the behavior of $R(s;\rp)$ at $s=0$. To describe it 
we need to introduce the regulator associated to the free part of the 
cohomology of a local system of free $\Z$-modules.

Let $\rp\colon \SL(2,\C)\to \GL(V)$ be a finite-dimensional real representation
of $\SL(2,\C)$ and assume that there exists a lattice $M\subset V$ which is 
stable under $\Gamma$. Let $\cM$ be the associated local system. Let $E$ be
the flat vector bundle over $X$ attached to the restriction of $\rp$ to 
$\Gamma$. By \cite[Lemma 3.1]{MM}, the bundle $E$ can be equipped with a
canonical fibre metric. Let $\cH^*(X;E)$ be the space of $E$-valued harmonic
forms on $X$ with respect to this metric in $E$ and the hyperbolic
metric on $X$. There is a canonical isomorphism
\[
\cH^*(X;E)\cong H^*(X,\cM\otimes\R).
\]
We equip $H^*(X;\cM\otimes\R)$ with the inner product $\langle\cdot,\cdot
\rangle$ induced by the $L^2$-metric on $\cH^*(X;E)$. Let $H^*(X;\cM)_{\free}=
H^*(X;\cM)/H^*(X;\cM)_{\tors}$.  We identify $H^*(X;\cM)_{\free}$ with a 
subgroup of $H^*(X;\cM\otimes\R)$. For each $p=0,1,2,3$ choose a basis
$a_1,...,a_{r_p}$ of $H^*(X;\cM)_{\free}$ and let $G_p(\cM)$ be the Gram matrix
of this basis with respect to the inner product $\langle\cdot,\cdot\rangle$. 
Put $R_p(\cM):=\sqrt{|\det G_p(\cM)|}$. Then the regulator is defined as
\[
R(\cM):=\prod_{p=0}^3 R_p(\cM)^{(-1)^p}.
\]
We can now state our result which describes the behavior of the Ruelle zeta
function at the origin.
 
\begin{theo}\label{value1}
Let $X=\Gamma\bs\bH^3$ be a  compact oriented arithmetic hyperbolic 
$3$-manifold. 
Let $\rp\colon \SL(2,\C)\to\GL(V)$ be an irreducible finite-dimensional 
representation of $\SL(2,\C)$. Let $M\subset V$ be a lattice which is stable
under $\Gamma$ and denote by $\cM$ the associated local system of free
$\Z$-modules over $X$. Let $R(s;\rho)$ be the twisted Ruelle zeta function.
Then we have

\smallskip\noindent
${\mathrm 1)}$ If $\rp\neq 1$, then the order of $R(s;\rp)$ at $s=0$ is 
given by
\begin{equation}\label{order1}
\ord_{s=0}R(s;\rp)=\sum_{q=1}^3(-1)^q q\ra H^q(X;\cM).
\end{equation}
If $\rp=1$, then the order equals $2\dim H^1(X,\R)-4$.

\smallskip\noindent
${\mathrm 2)}$  Let $R^*(0;\rp)$  be the leading coefficient of the Laurent  
expansion of $R(s;\rp)$ at $s=0$. We have 
\begin{equation}\label{value2}
|R^*(0;\rp)|=
R(\cM)^{-1}\cdot\prod_{q=0}^3|H^q(X;\cM)_{\tors}|^{(-1)^q}.
\end{equation}
Moreover, if $\rp=\rp_{m,m}$ for some $m\in\N_0$, then \eqref{value2} holds
for $R^*(0;\rp)$.
\end{theo}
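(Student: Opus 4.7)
The plan is to relate $R(s;\rp)$ to the Ray--Singer analytic torsion of $X$ with coefficients in the flat bundle $E$ associated to $\rp$, and then to extract the stated torsion-cohomology formula via the Cheeger--M\"uller equality. I would begin by factoring $R(s;\rp)$ as a short alternating product of twisted Selberg zeta functions $Z(s;\rp\otimes\tau_p)$ of form degree $p$, in the spirit of Fried \cite{Fr2}. For a hyperbolic $3$-manifold this factorization is simple enough that the order of $R(s;\rp)$ at $s=0$ reduces to a combination of orders of the individual Selberg factors at shifted points, which by the Selberg trace formula are controlled by the dimensions of $\cM$-valued harmonic forms. Via the Hodge isomorphism $\cH^p(X;E)\cong H^p(X;\cM\otimes\R)$ these dimensions equal $\ra H^p(X;\cM)$, and the bookkeeping yields \eqref{order1} when $\rp\neq 1$. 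When $\rp=1$, nonzero contributions from $H^0$ and $H^3$ combined with the pole structure of the untwisted Selberg zetas at $s=0$ produce the exceptional value $2\dim H^1(X,\R)-4$.

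For the leading coefficient, the same factorization, combined with the identification of each $Z(s;\rp\otimes\tau_p)$ with a regularized determinant of a twisted Hodge Laplacian, yields a Fried-type identity
\[
|R^*(0;\rp)| = T_X(\rho)^{c}
\]
for an explicit exponent $c$ compatible with the inverse convention in \eqref{ruelle1}, where $T_X(\rho)$ is the Ray--Singer analytic torsion of $E$ relative to the canonical fibre metric of \cite[Lemma 3.1]{MM}. Since $\SL(2,\C)$ has no nontrivial characters, every $\rp_{m,n}$ is unimodular, so M\"uller's extension of the Cheeger--M\"uller theorem to unimodular flat bundles equates $T_X(\rho)$ with the Reidemeister torsion of $(X,\cM\otimes\R)$ computed in an $L^2$-orthonormal basis of $\cH^*(X;E)$. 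The transition from this $L^2$-basis to the integral basis of $H^*(X;\cM)_{\free}$ produces exactly the regulator factor $R(\cM)^{-1}$.

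The remaining factor, namely the Reidemeister torsion of a cochain complex of free $\Z$-modules computed in a fixed integral basis on cohomology, equals $\prod_q |H^q(X;\cM)_{\tors}|^{(-1)^q}$ by a standard elementary-divisor argument. Assembling these three ingredients yields \eqref{value2}. The refinement for $\rp=\rp_{m,m}$ follows from self-conjugacy: every Euler factor in \eqref{ruelle1} is then real-valued on $\R$, so $R(s;\rp)$ admits a real Laurent expansion at $s=0$, and the sign of its leading coefficient can be read off from the parity of the order. The main obstacle I anticipate is the precise bookkeeping in the Selberg factorization --- matching shifts, signs and exponents so that $c$ in the Fried-type identity above cancels the $L^2$/integral basis change --- together with the case $\rp=1$, where the naive alternating sum $\sum_{q=1}^{3}(-1)^q q\,\ra H^q(X;\cM)$ and the true order differ by a constant that must be tracked through the spectral and topological contributions at the origin.
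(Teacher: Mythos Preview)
Your plan is essentially the paper's own argument: pass from $R(s;\rp)$ to the analytic torsion $T_X(\rp)$, apply the Cheeger--M\"uller theorem for unimodular representations \cite{Mu1}, and then invoke Proposition~\ref{reidem1} (the regulator-times-torsion formula, which is exactly your ``elementary-divisor argument''). The one cosmetic difference is that the paper does not redo the Selberg/Fried factorization from scratch but simply quotes \cite[Theorem~1.5]{Mu2}, which already packages that analysis into the statements $|R(0;\rp)|=T_X(\rp)^{-2}$ when $\rp_\theta\not\cong\rp$, and $R^*(0;\rp)=T_X(\rp;h)^{-2}$ together with the order formula when $\rp_\theta\cong\rp$.

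There is one piece of bookkeeping you are underweighting and which the paper handles explicitly. The Ruelle zeta and the analytic torsion $T_X(\rp)$ are attached to the \emph{complex} representation, whereas the lattice $M$ lives in $V$ as a real vector space and Proposition~\ref{reidem1} computes the \emph{real} Reidemeister torsion $\tau_X(\rp^\R;h)$. The bridge is Lemma~\ref{realcompl}, $\tau_X(\rp^\R)=\tau_X(\rp)^{2}$, and it is precisely this squaring that turns $T_X(\rp)^{-2}$ into $\tau_X(\rp^\R)^{-1}$; without it your sentence ``$T_X(\rho)$ equals the Reidemeister torsion of $(X,\cM\otimes\R)$'' is off by a square and your exponent $c$ will not close up. The same factor of $2$, now as $\ra H^q(X;\cM)=2\dim_\C H^q(X,E_\rp)$, is what converts the order formula coming from \cite{Mu2} into \eqref{order1}. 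Finally, your reality argument for $\rp_{m,m}$ is reasonable, but note that in the paper this case is exactly $\rp_\theta\cong\rp$, and \cite[Theorem~1.5,~2)]{Mu2} already delivers $R^*(0;\rp)$ itself (not merely its modulus), so no separate sign analysis is needed.
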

Recall that the irreducible finite-dimensional representations of $\SL(2,\C)$
are given by \eqref{irr-rep}. For each even $n$ there is a $\Gamma$-stable 
lattice in $S^n(\C^2)$. Therefore, if $\rp=\rp_{m,n}$ with $m$ and $n$ even, 
there exist $\Gamma$-stable lattices in the space of the representation. 

We note that there is a formal analogy of \eqref{order1} and \eqref{value2} 
to Lichtenbaum's conjectures on special 
values of Hasse-Weil zeta functions of algebraic varieties 
\cite{Li1}, \cite{Li2}. To make this statement more transparent, we 
recall some details. Let $\cX$ be a regular scheme  which
is separated and of finite type over $\Spec(\Z)$. Let $\zeta_{\cX}(s)$ be the
Hasse-Weil zeta function of $\cX$. It is given by an Euler product that 
converges in some half-plane $\Re(s)>c$.  The Euler
product is expected to have a meromorphic extension to the whole complex 
plane. This is known in some cases.
Lichtenbaum's conjectures are concerned with the behavior of $\zeta_{\cX}(s)$ 
at 
$s=0$. First of all, Lichtenbaum conjectures the existence of a certain 
new cohomology theory for schemes over $\Z$, called ``Weil-\'etale'' 
cohomology. Let  $H^p_c(\cX,\Z)$ be the $p$-th ``Weil-\'etale'' cohomology 
group 
of $\cX$ with compact supports and coefficients in $\Z$. Then the 
conjectures of Lichtenbaum are the following statements: 1) The groups 
$H^p_c(\cX,\Z)$ are finitely
generated and vanish for $p>2\dim\cX+1$. 2) The order of $\zeta_{\cX}(s)$ at 
$s=0$ is given by
\[
\ord_{s=0}\zeta_X(s)=\sum_{p}(-1)^p p\ra H^p_c(\cX,\Z).
\]
3) The leading coefficient $\zeta_{\cX}^*(0)$of the Laurent expansion of 
$\zeta_{\cX}(s)$ 
satisfies
\[
\zeta_{\cX}^*(0)=R^{-1}\cdot \prod_{p}|H^p_c(\cX,\Z)_{\tors}|^{(-1)^{p}},
\]
where $R$ is the Reidemeister torsion of some acyclic complex associated to
$H^*_c(\cX,\R)$, equipped with volume forms determined  by the isomorphism
$H_c^*(\cX,\R)=H^*_c(\cX,\Z)\otimes\R$ and a basis of $H_c^*(\cX,\Z)_{\free}$. 
We note that Denniger \cite{Ch} first discussed a geometric analogue of 
the Lichtenbaum conjectures in the  context of dynamical systems.

Our approach to prove our main results is based
on the study of the Reidemeister torsion of $X$. Let $\rp_{2k}$ be the 
representation of $\Gamma$ obtained by the restriction of $\Sym^{2k}\colon
\SL(2,\C)\to \GL(S^{2k}(\C^2))$ to $\Gamma$. Let $\rp_{2k}^\R$ be the underlying
real representation. Denote by $\tau_X(\rp_{2k}^\R)$ be the
Reidemeister torsion of $X$ with respect to $\rp_{2k}^\R$ (see section 
\ref{reidcohom} for its definition). Then the Reidemeister
torsion satisfies
\[
\tau_X(\rp_{2k}^\R)=\prod_{p=1}^3|H^p(X,\cM_{2k})|^{(-1)^{p+1}}.
\]
This equality was first noted by Cheeger \cite[(1.4)]{Ch}. Now we apply
\cite[Corollary 1.2]{Mu2} which describes the asymptotic behavior of 
$\ln\tau_X(\rp_{2k}^\R)$ as $k\to\infty$. This implies 
Theorem \ref{thm-asymp2}. To estimate $\ln |H^p(X,\cM_{2k})|$ for $p=1,3$, we
use that $H^3(X,\cM_{2k})$ is isomorphic to the space  $(M_{2k})_\Gamma$
of coinvariants. To bound $(M_{2k})_\Gamma$ we can work locally. This
leads to \eqref{estim1}. Together with Theorem \ref{thm-asymp2} we obtain
\eqref{asymp1}, which proves Theorem \ref{thm-asymp}.

To prove Theorem \ref{value1}, we use \cite[Theorem 1.5]{Mu2}. Using this 
theorem we obtain the statement about the order of $R(s;\rp)$ at $s=0$.
Moreover, it follows that the leading coefficient of the Laurent expansion of 
$R(s;\rp)$ at $s=0$ equals $T_X(\rp;h)^{-2}$, where $T_X(\rp;h)$ is the 
Ray-Singer analytic torsion of $X$ and $\rp|_\Gamma$ with respect to the 
canonical fibre metric in the flat bundle $E$ mentioned above. By 
\cite[Theorem 1]{Mu1}, the analytic torsion equals the Reidemeister torsion
$\tau_X(\rp;h)$. If there exists a $\Gamma$-stable lattice in $V$, then the
Reidemeister torsion satisfies 
\begin{equation}\label{reidcohom5}
\tau_X(\rp;h)=R(\cM)^{-1}\prod_{p=0}^3|H^p(X,\cM)_{\tors}|^{(-1)^{p+1}}.
\end{equation} 
Combining these facts, we obtain Theorem \ref{value1}.

The paper is organized as follows. In section \ref{reidcohom} we discuss the
relation between Reidemeister torsion and cohomology, if the chain complex is
defined over the integers. In particular, we prove \eqref{reidcohom5}. In
section \ref{arithmgr} we collect a number of facts about cocompact arithmetic
subgroups of $\SL(2,\C)$ which are derived from quaternion division algebras. 
In particular, we prove that the even symmetric powers contain $\Gamma$-stable
lattices. In section \ref{bounds} we prove \eqref{estim1}.
In the final section \ref{main} we prove our main results.

\noindent
{\bf Acknowledgment.} The second named author would like to thank 
G\"unter Harder and Skip Garibaldi
for some very helpful discussions and suggestions. Especially, the proof of
Lemma 3.1 was suggested by Skip Garibaldi.  The first named author would like 
to acknowledge the generous support of the Institute for Advanced Study.

\section{Reidemeister torsion and cohomology}\label{reidcohom}

We recall some facts about Reidemeister torsion. Let $V$ be a 
finite-dimensional real vector space of dimension $n$. Set $\det V=
\Lambda^n(V)$. A volume element in $V$ is a nonzero element $\omega\in\det V$. 
Any volume element determines an isomorphism $\det V\cong \R$. Furthermore,
note that a volume element is given by $\omega=e_1\wedge e_2\wedge \cdots
\wedge e_n$ for some basis $e_1,...,e_n$ of $V$. 

Let
\begin{equation}\label{complex1}
C^*\colon 0\to C^0\stackrel{d_0}\longrightarrow C^1\stackrel{d_1}
\longrightarrow \cdots \stackrel{d_{n-1}}\longrightarrow C^n \to 0
\end{equation} 
be a complex of finite dimensional $\R$-vector spaces. Let 
\[
B^j:=d_{j-1}(C^{j-1}),\quad Z^j:=\ker(d_j)
\]
and denote by $H^j(C^*):=Z^j/B^j$ the $j$-th cohomology group of 
$C^*$. Assume that for each $j$ we are given volume elements 
$\omega_j\in\det C^j$ and $\mu_j\in \det H^j(C^*)$. Let $\omega=(\omega_0,...,
\omega_n)$ and $\mu=(\mu_0,...,\mu_n)$. Then the Reidemeister torsion 
$\tau(C^*,\omega,\mu)\in\R^+$ of the complex $C^*$ is defined as a certain 
ratio of volumes (see \cite{Mi}). In \cite{RS} Ray and Singer
gave an equivalent definition in terms of the combinatorial Laplacian which
we recall next. In each $C^j$ we choose
an inner product $\langle\cdot,\cdot\rangle_j$ with volume element $\omega_j$. 
Let
\[
d_{j+1}^*\colon C^{j+1}\to C^j
\]
be the adjoint operator to $d_j$ with respect to the inner products in $C^{j}$
and $C^{j+1}$, respectively. Define the combinatorial Laplacian by
\begin{equation}
\Delta_j^{(c)}=d_{j+1}^*d_j+d_{j-1}d_j^*.
\end{equation}
Then $\Delta_j^{(c)}$ is a symmetric non-negative operator in $C^j$. We have the
combinatorial Hodge decomposition
\begin{equation}\label{hodge1}
C^j=\ker(\Delta_j^{(c)})\oplus d_{j-1}(C^{j-1})\oplus d_{j+1}^*(C^{j+1}).
\end{equation}
It implies that there is a canonical isomorphism
\begin{equation}\label{hodge2}
\ker(\Delta^{(c)}_j)\stackrel{\cong}\longrightarrow H^j(C^*).
\end{equation}
The inner product in $C^j$ restricts to an inner product in 
$\ker(\Delta_j^{(c)})$. Using the isomorphism  \eqref{hodge2} we get an inner 
product $\langle\cdot,\cdot \rangle$ in $H^j(C^*)$. Let $h_1,...,h_{d_j}\in
H^j(C^*)$ be a basis such that $\mu_j=h_1\wedge\cdots\wedge h_{d_j}$. Let $G_j$
be the Gram matrix with entries $\langle h_k,h_l\rangle_j$, $1\le k,l\le d_j$.
Put
\[
V(\mu_j)=\sqrt{|\det G_j|}.
\]
Denote by $\det^\prime\Delta^{(c)}_j$ the product of the nonzero eigenvalues 
of 
$\Delta^{(c)}_j$. Then a slight generalization of \cite[Proposition 1.7]{RS}
gives
\begin{lem}\label{laplace1}
We have
\[
\tau(C^*,\omega,\mu)=\prod_{j=0}^n V(\mu_j)^{(-1)^j}\cdot \prod_{j=1}^n
\left(\det^\prime \Delta^{(c)}_j\right)^{(-1)^{j+1}j/2}.
\]
\end{lem}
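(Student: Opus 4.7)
The plan is to isolate the cohomological contribution of $\tau(C^*,\omega,\mu)$ into the $V(\mu_j)$ prefactors by means of the combinatorial Hodge isomorphism, and then compute the remaining torsion directly via the singular value decomposition of the differentials.

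First, transport the restriction of the inner product on $C^j$ to $\ker(\Delta_j^{(c)})$ along \eqref{hodge2}, obtaining an induced inner product on $H^j(C^*)$, and let $\tilde\mu_j\in\det H^j(C^*)$ be a unit volume element for this induced product. By construction $V(\tilde\mu_j)=1$ and $\mu_j=\pm V(\mu_j)\,\tilde\mu_j$. From Milnor's definition, the Reidemeister torsion is multi-homogeneous in the $\mu_j$ of degree $(-1)^j$, so
\[
\tau(C^*,\omega,\mu)=\prod_{j=0}^n V(\mu_j)^{(-1)^j}\cdot\tau(C^*,\omega,\tilde\mu).
\]
It therefore suffices to show
\[
\tau(C^*,\omega,\tilde\mu)=\prod_{j=1}^n\left(\det^\prime\Delta_j^{(c)}\right)^{(-1)^{j+1}j/2}.
\]

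Second, fix bases of each $C^j$ adapted to \eqref{hodge1}: take orthonormal bases of $\ker(\Delta_j^{(c)})$, of $d_{j-1}(C^{j-1})$, and of $d_{j+1}^*(C^{j+1})$. Applying the singular value decomposition to the isomorphism $d_j\colon d_{j+1}^*(C^{j+1})\to d_j(C^j)$, arrange these so that $d_j$ maps each basis vector of $d_{j+1}^*(C^{j+1})$ to a positive multiple $\sigma_i^{(j)}$ of a basis vector of $d_j(C^j)\subset C^{j+1}$. The union of the three orthonormal families is an orthonormal basis of $C^j$, whose associated volume equals $\pm\omega_j$ and so does not affect torsion.

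Third, evaluate $\tau(C^*,\omega,\tilde\mu)$ via Milnor's recipe in these adapted bases. The harmonic representatives of cohomology are already orthonormal, and the chosen lifts of a basis of $d_j(C^j)\subset C^{j+1}$ to $d_{j+1}^*(C^{j+1})\subset C^j$—via the inverse of $d_j|_{d_{j+1}^*(C^{j+1})}$—are rescaled by $1/\sigma_i^{(j)}$. Hence the Milnor factor at degree $j$ is a signed power of $\prod_i\sigma_i^{(j)}$. On the other hand, \eqref{hodge1} decomposes the nonzero spectrum of $\Delta_j^{(c)}$ as $\{(\sigma_i^{(j-1)})^2\}_i$ on $d_{j-1}(C^{j-1})$ together with $\{(\sigma_i^{(j)})^2\}_i$ on $d_{j+1}^*(C^{j+1})$, so $\det^\prime\Delta_j^{(c)}=\prod_i(\sigma_i^{(j-1)}\sigma_i^{(j)})^2$. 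Substituting this into the alternating product of the singular value contributions and telescoping yields exactly the exponents $(-1)^{j+1}j/2$ on each $\det^\prime\Delta_j^{(c)}$.

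The main obstacle is the telescoping in the last step: one must verify that when the Milnor volume ratios $\prod_i\sigma_i^{(j)}$ are collected into the alternating product $\prod_j(\cdot)^{(-1)^j}$ and re-expressed through the Laplacian eigenvalues, the combinatorial rearrangement produces precisely the weights $(-1)^{j+1}j/2$. This is the content of Ray--Singer's Proposition 1.7 in the acyclic setting, and the present lemma differs only in that step one has already absorbed the cohomology into the $V(\mu_j)^{(-1)^j}$ prefactor, so that the remaining torsion computation is formally unchanged.
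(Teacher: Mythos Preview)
Your proposal is correct and follows precisely the approach the paper indicates: the paper does not supply a proof of this lemma at all, but simply presents it as ``a slight generalization of \cite[Proposition 1.7]{RS}''. Your argument---factoring out the cohomological contribution via the homogeneity of $\tau$ in the $\mu_j$, then reducing to Ray--Singer's acyclic computation in adapted orthonormal bases---is exactly that slight generalization spelled out.
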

Now let 
\begin{equation}\label{complex2}
A^*\colon 0\longrightarrow A^0\stackrel{d_0}\longrightarrow A^1\stackrel{d_1}
\longrightarrow \cdots \stackrel{d_{n-1}}\longrightarrow A^n \longrightarrow 0
\end{equation} 
be a complex of free finite rank $\Z$-modules and set $C^j:=A^j\otimes \R$.  
Then we get a complex \eqref{complex1} of $\R$-vector spaces such that each
$C^j$ has a preferred equivalence class of bases coming from bases of $A^j$.
For any two such bases the matrix of change from one to the other is an
invertible matrix with integral entries. Thus $C^j$ is endowed with a 
canonical volume element $\omega_j$. Furthermore, with respect to any of
the bases coming from $A^j$, the coboundary operator
$d_j$ is represented by a matrix with integral entries. Let $H^j(A^*)$  denote
the $j$-th cohomology group of the complex \eqref{complex2}. Let 
$H^j(A^*)_{\tors}$ be the torsion subgroup of $H^j(A^*)$ and let 
$H^j(A^*)_{\free}=H^j(A^*)/H^j(A^*)_{\tors}$. We identify $H^j(A^*)_{\free}$ with
a subgroup of $H^j(C^*)$. Let $\mu_j$ be a volume element of $H^j(C^*)$. Put
\begin{equation}
R(\mu_j):=\vol(H^j(C^*)/H^j(A^*)_{\free}),
\end{equation}
where the volume is computed with respect to $\mu_j$. Denote by $\tau(C^*,\mu)$
the Reidemeister torsion of $C^*=A^*\otimes \R$ with respect to the canonical
volume element $\omega$, which is determined by $A^*$, and the volume the
volume element $\mu\in\prod_{j=0}^n\det H^j(C^*)$. Finally, denote by 
$|H^j(A^*)_{\tors}|$ the order of the finite group $H^j(A^*)_{\tors}$. Then the
following elementary lemma which describes the relation between Reidemeister 
torsion and the torsion of the cohomology of the complex $A^*$ is proved in  
\cite[section 2]{BV}.
\begin{lem}\label{lem-cohtor}
We have
\begin{equation}\label{cohtor}
\tau(C^*,\mu)=\prod_{j=0}^n R(\mu_j)^{(-1)^j}\cdot \prod_{j=0}^n 
|H^j(A^*)_{\tors}|^{(-1)^{j+1}}.
\end{equation}
\end{lem}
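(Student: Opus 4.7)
The plan is to unwind the elementary definition of $\tau(C^*,\mu)$ and make all the choices in that definition integrally wherever the $\Z$-structure allows, so that the factors $|H^j(A^*)_{\tors}|$ and $R(\mu_j)$ appear explicitly as volume ratios. I will use the standard basis-oriented formulation: choose for each $j$ a basis $\beta^j$ of $B^j := d_{j-1}(C^{j-1})$, a lift $\tilde h^j \subset Z^j := \ker d_j$ of a basis of $H^j(C^*)$ whose wedge is $\mu_j$, and a set-theoretic lift $\tilde\beta^{j+1} \subset C^j$ of $\beta^{j+1}$ under $d_j$. Then $(\beta^j,\tilde h^j,\tilde\beta^{j+1})$ is a basis of $C^j$, and with the standard sign convention
\[
\tau(C^*,\mu) = \prod_{j=0}^n \bigl|[\beta^j,\tilde h^j,\tilde\beta^{j+1}]/\omega_j\bigr|^{(-1)^{j+1}},
\]
where the bracket denotes the resulting wedge in $\det C^j$.

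Next, I will set $Z^j_\Z := \ker(d_j|_{A^j})$, $B^j_\Z := d_{j-1}(A^{j-1})$, and let $\bar B^j_\Z \subset Z^j_\Z$ be the saturation of $B^j_\Z$ in $Z^j_\Z$, so that $\bar B^j_\Z / B^j_\Z \cong H^j(A^*)_{\tors}$ and $Z^j_\Z/\bar B^j_\Z \cong F^j := H^j(A^*)_{\free}$. Since $F^j$ and $B^{j+1}_\Z$ are free, the short exact sequences $0\to \bar B^j_\Z \to Z^j_\Z \to F^j \to 0$ and $0\to Z^j_\Z \to A^j \to B^{j+1}_\Z \to 0$ split over $\Z$, and concatenating an integral basis of $\bar B^j_\Z$, an integral lift of an integral basis of $F^j$, and an integral lift of an integral basis of $B^{j+1}_\Z$ produces an integral basis of $A^j$ (whose wedge is $\pm\omega_j$). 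If instead I take $\beta^j$ to be an integral basis of $B^j_\Z$ (a sublattice of $\bar B^j_\Z$ of index $|H^j(A^*)_{\tors}|$), $\tilde h^j = \tilde f^j$ an integral lift of an integral basis of $F^j$, and $\tilde\beta^{j+1}$ an integral lift of an integral basis of $B^{j+1}_\Z$, then the resulting basis spans a sublattice of $A^j$ of index exactly $|H^j(A^*)_{\tors}|$, so $|[\beta^j,\tilde f^j,\tilde\beta^{j+1}]/\omega_j| = |H^j(A^*)_{\tors}|$.

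Finally I will switch from the integral lift $\tilde f^j$ to a lift of the $\mu_j$-basis. The change-of-basis on $H^j(C^*)$ from an integral basis of $F^j$ to a $\mu_j$-basis has determinant of absolute value $R(\mu_j)$ by definition of the regulator, and the corresponding correction in each lifted vector lies in $B^j$ and so is annihilated upon wedging with $\beta^j$. Therefore $|[\beta^j,\tilde h^j,\tilde\beta^{j+1}]/\omega_j| = |H^j(A^*)_{\tors}|/R(\mu_j)$, and taking the alternating product with exponent $(-1)^{j+1}$ yields
\[
\tau(C^*,\mu) = \prod_{j=0}^n \bigl(|H^j(A^*)_{\tors}|/R(\mu_j)\bigr)^{(-1)^{j+1}} = \prod_{j=0}^n R(\mu_j)^{(-1)^j}\cdot\prod_{j=0}^n |H^j(A^*)_{\tors}|^{(-1)^{j+1}}.
\]
The only genuine obstacle is bookkeeping of sign conventions: one must ensure the exponent in the underlying definition of $\tau$ is $(-1)^{j+1}$ and not $(-1)^j$ (otherwise the two factors in the target identity would swap exponents), and that $R(\mu_j)$ is the volume of the quotient rather than its inverse. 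Both can be pinned down by specializing to an acyclic integral complex (where $R(\mu_j)=1$ and $H^j(A^*)_{\tors}=H^j(A^*)$) and comparing with the Ray--Singer formulation of Lemma \ref{laplace1}.
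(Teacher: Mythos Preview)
Your argument is correct. The paper does not give its own proof of this lemma; it simply remarks that the statement is elementary and refers to \cite[Section~2]{BV} (and to \cite[(1.4)]{Ch} for the acyclic special case). Your proof is exactly the standard argument behind those citations: make all the Milnor-style basis choices integrally so that $[\beta^j,\tilde f^j,\tilde\beta^{j+1}]/\omega_j$ becomes the index $|H^j(A^*)_{\tors}|$ of $B^j_\Z$ in its saturation $\bar B^j_\Z$, and then rescale the middle block from an integral basis of $H^j(A^*)_{\free}$ to a $\mu_j$-basis, picking up the regulator $R(\mu_j)$. The only subtlety you flagged---the sign convention on the exponent---is indeed just a normalization, and your suggested sanity check against the acyclic case (or against Lemma~\ref{laplace1}) pins it down. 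So your proposal is a complete, self-contained proof of what the paper merely cites.
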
 
If $\mu_j$ is the volume element associated to 
the equivalence class of bases of $H^j(C^*)$ coming from bases of the 
lattice $H^j(A^*)_{free}$, we have $R(\mu_j)=1$. In this case \eqref{cohtor}
was first stated in \cite[(1.4)]{Ch} and proved for the case where $A^*$ is
acyclic. 

We now turn to the geometric situation. 
Let $X$ be compact $n$-dimensional Riemannian manifold.  
Choose a base point $x_0\in X$. Let
$\Gamma:=\pi_1(X,x_0)$ be the fundamental group of $X$ with respect to $x_0$,
acting on the universal
covering $\tilde X$ of $X$ as deck transformations. Let $V$ be a 
finite-dimensional real vector space and let
\[
\rep\colon \Gamma\to\GL(V)
\]
be a representation of $\Gamma$ on $V$. It defines a flat vector bundle $E$
over $X$.

Fix a smooth triangulation $K$  of $X$. Let $\tilde K$ be the lift of $K$ to 
a smooth triangulation of $\tilde X$. We think of $K$ as being embedded as a
fundamental domain in $\tilde K$, so that $\tilde K$ is the union of the 
translates of $K$ under $\Gamma$. Let $C^q(\tilde K;\Z)$ be the  cochain
group generated by the $q$-simplexes of $\tilde K$. Then $C^q(\tilde K;\Z)$
is a $\Z[\Gamma]$-module. Define the twisted cochain group $C^q(K;V)$ by
\[
C^q(K;V):=C^q(\tilde K;\Z)\otimes_{\Z[\Gamma]}V.
\]
From the cochain complex $C^*(\tilde K,\Z)$ we get the twisted cochain complex
\begin{equation}
C^*(K;V)\colon  0\to C^0(K;V)\stackrel{d_0}\longrightarrow 
C^1(K;V)\stackrel{d_1}\longrightarrow\cdots\stackrel{d_{n-1}}
\longrightarrow C^n(K;V)\to 0.
\end{equation}
Let $H^*(K;V)$ be the cohomology groups. They are independent of $K$ and will be
denoted by $H^*(X;V)$. The cohomology can also be computed by a different
complex.  Let $C^q(K;E)$ be the set
of $E$-valued $q$-cochains \cite[Chapt. VI]{Wh}. An element of $C^q(K;E)$ 
is a function that assigns to each $q$-simplex a section of $E$ on that 
simplex. The corresponding cochain complex of finite-dimensional $\R$-vector 
spaces computes the cohomology groups $H^*(K;E)=H^*(X;E)$. 
By \cite[p. 278]{Wh} there is canonical isomorphisms
\[
H^*(X;V)\cong H^*(X;E).
\]
Assume that a volume element $\theta\in\det V$ is given.
Let $\sigma_j^q$, $j=1,...,r_q$, be the oriented $q$-simplexes of $K$
considered as preferred bases of the $\Z[\Gamma]$-module $C^q(\tilde K;\Z)$.
Let $e_1,...,e_m$ be a basis
of $V$ such that $\theta=\pm e_1\wedge\cdots\wedge e_n$. Then
$ \{\sigma_j^q\otimes e_k\colon j=1,...,r_q,\; k=1,...,m\}$ is a preferred 
 basis of  $C^q(K;V)$. It defines a volume element $\omega_q\in
\det C^q(K;V)$. The volume element depends on several choices 
\cite[p. 727]{Mu2}. Now assume that $\rep$ is  unimodular, i.e., we have 
\[
|\det\rep(\gamma)|=1,\quad \forall\gamma\in\Gamma.
\]
Then $\omega$ is unique up to sign. Let $\mu\in\det H^*(X;V)$ be a volume 
element. Then we can define the Reidemeister torsion 
$\tau(C^*(K;V);\omega,\mu)$. It still depends on $\theta\in\det V$. However,
if the Euler characteristic of $X$ vanishes, then it is also independent of 
$\theta$ \cite[p. 727]{Mu1}.
It is well known that $\tau(C^*(K;V);\omega,\mu)$ is invariant under 
subdivision \cite{Wh}. Since any two smooth triangulations of $X$ have a 
common subdivision, $\tau(C^*(K;V);\omega,\mu)$ is independent of $K$. 
Therefore we may put
\[
\tau_X(\rep;\mu):=\tau(C^*(K;V);\omega,\mu).
\]
Now pick  a  fibre metric $h$ on $E$. Let $\cH^*(X;E)$ be the space of 
$E$-valued harmonic forms on $X$. Then we have the Hodge-de Rham isomorphism
\[
\cH^*(X;E) \cong H^*(X;E)\cong H^*(X;V).
\]
Using this isomorphism, we get an inner product in $H^*(X;V)$
and a corresponding volume element $\mu_h\in\det H^*(X;V)$. Let 
\[
\tau_X(\rep;h):=\tau_X(\rep,\mu_h).
\]

Now assume that there exists a lattice $M\subset V$ which is invariant under
$\Gamma$, i.e., $M$ is a free abelian subgroup of $V$ such that $V=M\otimes\R$ 
and $M$ is invariant under $\Gamma$. Thus $M$
is a finitely generated $\Z[\Gamma]$-module. It defines a local system 
$\cM$ of free $\Z$-modules on $X$. Set
\begin{equation}
C^q(K;M)=C^q(\tilde K;\Z)\otimes_{\Z[\Gamma]}M,\quad q=0,...,n,
\end{equation} 
and let $H^*(K;M)=H^*(X;M)$ denote the cohomology of the corresponding complex.
Now $C^*(K;M)$ is a complex of finitely generated free $\Z$-modules and we have
\[
C^*(K;V)=C^*(K;M)\otimes \R.
\]
So Lemma \ref{lem-cohtor} applies to this situation.
As above, we may also consider the set $C^q(K,\cM)$ of $\cM$-valued 
$q$-cochains \cite[Chapt. VI]{Wh}. The complex $C^*(K;\cM)$ computes the
cohomology $H^*(K;\cM)=H^*(X;\cM)$ and we have a canonical isomorphism
\[
H^*(X,M)\cong H^*(X,\cM).
\]
Each $H^q(X,\cM)$ is
a finitely generated $\Z$-module. Let $H^q(X,\cM)_{\tors}$ be the torsion 
subgroup and
\[
H^q(X;\cM)_{\free}=H^q(X,\cM)/H^q(X,\cM)_{\tors}.
\]
We identify  $H^q(X,\cM)_{\free}$ with a subgroup of $H^q(X,E)$.
Let $\langle\cdot,\cdot\rangle_q$ be the inner product in $H^q(X,E)$ 
induced by the $L^2$-metric on $\cH^q(X,E)$. 
 Let $e_1,...,e_{r_q}$ be a basis of $H^q(X,\cM)_{\free}$ and let $G_q$ be the
Gram matrix with entries $\langle e_k,e_l\rangle$. Put
\[
R_q(\rep,h)=\sqrt{|\det G_q|}, \quad q=0,...,n.
\]
Define the ``regulator'' $R(\rep,h)$ by
\[
R(\rep,h)=\prod_{q=0}^n R_q(\rep,h)^{(-1)^q}.
\]
\begin{prop}\label{reidem1}
 Let $\rep$ be a unimodular representation of $\Gamma$ on a finite-dimensional
$\R$-vector space $E$. Let $M\subset E$ be a $\Gamma$-invariant lattice and let
$\cM$ be the associated local system of finitely generated free $\Z$-modules 
on $X$. Let $h$ be
a fibre metric in the flat vector bundle $E=\cM\otimes\R$. Then we have
\[
\tau_X(\rep,h)=R(\rep,h)\cdot \prod_{q=0}^n 
\left|H^q(X,\cM)_{\tors}\right|^{(-1)^{q+1}}.
\]
\end{prop}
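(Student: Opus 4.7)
The strategy is to apply Lemma \ref{lem-cohtor} directly to the integral cochain complex $A^* := C^*(K;M)$, whose real extension is $C^*(K;V)$. Since $M\subset V$ is a $\Gamma$-stable lattice, the $\Z[\Gamma]$-module structure equips each $C^q(K;M) = C^q(\tilde K;\Z)\otimes_{\Z[\Gamma]}M$ with a canonical structure of finitely generated free $\Z$-module (with preferred basis of the form $\sigma_j^q\otimes m_k$, where $\{m_k\}$ is a $\Z$-basis of $M$). The resulting canonical volume element $\omega_q\in\det C^q(K;V)$ agrees, up to sign, with the one used in the definition of $\tau_X(\rep;h)$: any two $\Z$-bases of $M$ are related by a matrix in $\GL(M)\subset \GL(V)$ with determinant $\pm 1$, and the further change to a general $\R$-basis of $V$ has determinant $\pm 1$ by the unimodularity of $\rep$ acting on $\Gamma$-translates of such a basis. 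Thus $\tau_X(\rep;h) = \tau(C^*(K;V),\mu_h)$ with $\omega$ the canonical integral volume element and $\mu_h$ the volume element coming from the Hodge--de~Rham isomorphism.

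Next I identify $R(\mu_{h,q})$, in the sense of Lemma \ref{lem-cohtor}, with $R_q(\rep,h)$. By definition, $R(\mu_{h,q})$ is the covolume of the lattice $H^q(X;\cM)_{\free}\subset H^q(X;V)$ measured by the volume form $\mu_{h,q}$ determined by the $L^2$ inner product $\langle\cdot,\cdot\rangle_q$ via Hodge--de~Rham. Picking any basis $e_1,\dots,e_{r_q}$ of $H^q(X;\cM)_{\free}$ and letting $G_q$ be its Gram matrix with respect to $\langle\cdot,\cdot\rangle_q$, a standard linear algebra computation (expressing the basis in an orthonormal frame whose wedge equals $\mu_{h,q}$) gives
\[
R(\mu_{h,q}) \;=\; \sqrt{|\det G_q|} \;=\; R_q(\rep,h).
\]

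Substituting these identifications into Lemma \ref{lem-cohtor} yields
\[
\tau_X(\rep;h) \;=\; \prod_{q=0}^n R_q(\rep,h)^{(-1)^q}\cdot \prod_{q=0}^n \bigl|H^q(X;\cM)_{\tors}\bigr|^{(-1)^{q+1}} \;=\; R(\rep,h)\cdot \prod_{q=0}^n \bigl|H^q(X;\cM)_{\tors}\bigr|^{(-1)^{q+1}},
\]
which is precisely the claimed formula.

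The only genuinely delicate point, and the place where the unimodularity of $\rep$ is essential, is the first step: without unimodularity the canonical volume element $\omega$ on $C^*(K;V)$ coming from the lattice $M$ would not match the one used to define $\tau_X(\rep;h)$, and the Reidemeister torsion would be defined only modulo scalars of the form $\prod |\det\rep(\gamma_j)|^{\pm 1}$. Once the normalization is pinned down, the remainder of the argument is a purely formal bookkeeping step combining Lemma \ref{lem-cohtor} with the Gram-determinant description of the covolume.
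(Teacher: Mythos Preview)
Your proposal is correct and follows the same approach as the paper: the paper does not give a separate proof for this proposition but sets things up so that it is an immediate consequence of Lemma~\ref{lem-cohtor}, applied to $A^*=C^*(K;M)$ with $C^*=C^*(K;V)$, together with the identification of $R(\mu_{h,q})$ with the Gram-determinant $R_q(\rep,h)$. Your remark that unimodularity is what pins down the volume element $\omega$ is the right point; the only slight imprecision is in your phrasing of why the integral volume element agrees with the one defining $\tau_X(\rep;h)$---the issue is not a ``change to a general $\R$-basis'' but rather that different lifts of simplexes to $\tilde K$ alter the preferred basis by factors $\rep(\gamma)$, and that the residual dependence on the volume element $\theta\in\det V$ disappears because $\chi(X)=0$ (as noted just before the definition of $\tau_X(\rep;\mu)$).
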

We will also consider complex representations. A complex representation has an
underlying real representation and we need to understand the relation 
between the Reidemeister torsion for the two representations. This is answered 
by the following lemma.
\begin{lem}\label{realcompl}
Let $\chi\colon\Gamma\to \GL(V)$ be a unimodular, acyclic  representation in a 
finite-dimensional complex vector space $V$. Let $\chi^\R$ be the 
corresponding real representation in $V^\R$. Then we have
\[
\tau_X(\rho^\R)=\tau_X(\rho)^2.
\]
\end{lem}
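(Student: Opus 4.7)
The strategy is to compute both sides via the combinatorial Laplacian formula of Lemma \ref{laplace1} and reduce the identity to a single linear algebra fact: restriction of scalars doubles the spectrum of a nonnegative self-adjoint complex operator. Since $\chi$ is acyclic, both $H^*(X;V)=0$ and $H^*(X;V^\R)=0$, so no cohomological volume terms appear. Unimodularity of $\chi$ implies unimodularity of $\chi^\R$ (indeed $|\det_\R \chi^\R(\gamma)| = |\det_\C \chi(\gamma)|^2 = 1$), so the canonical volume elements on the cochain groups are well-defined up to sign on both sides.

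First I would fix a triangulation $K$ of $X$, a Hermitian inner product on $V$ for which the preferred basis $e_1,\dots,e_m$ is orthonormal, and the induced Hermitian inner product on the cochain complex $C^*(K;V)$. Passing to the underlying real structure, I take the real inner product $\Re\langle\cdot,\cdot\rangle$ on $V^\R$, which makes $\{e_1,ie_1,\dots,e_m,ie_m\}$ orthonormal; the corresponding real inner product on $C^*(K;V^\R)$ is the real part of the Hermitian inner product on $C^*(K;V)$. The volume elements determined by these orthonormal preferred bases agree with the canonical volume elements $\omega$, $\omega^\R$ up to sign (this is the only bookkeeping step). With respect to these inner products the coboundary operator and its adjoint for $C^*(K;V^\R)$ are simply the restrictions of scalars of the corresponding complex operators, so
\[
\Delta_{j,\R}^{(c)} = \bigl(\Delta_{j,\C}^{(c)}\bigr)^\R
\]
as operators on $C^j(K;V^\R)=C^j(K;V)^\R$.

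The key linear algebra fact is then: if $T$ is a nonnegative self-adjoint $\C$-linear operator on a finite-dimensional Hermitian space $W$ with nonzero eigenvalues $\lambda_1,\dots,\lambda_r$ (counted with $\C$-multiplicity), then the underlying $\R$-linear operator $T^\R$ on $W^\R$ has the same nonzero eigenvalues, each with doubled multiplicity, because every $\C$-eigenvector $v$ yields two $\R$-independent real eigenvectors $v$ and $iv$ (with the same real eigenvalue $\lambda_k$). Hence
\[
\det\nolimits' \Delta_{j,\R}^{(c)} = \bigl(\det\nolimits' \Delta_{j,\C}^{(c)}\bigr)^2.
\]

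Substituting into Lemma \ref{laplace1} and using acyclicity to drop the $V(\mu_j)$ factors gives
\[
\tau_X(\chi^\R) = \prod_{j\ge 1}\bigl(\det\nolimits' \Delta_{j,\R}^{(c)}\bigr)^{(-1)^{j+1}j/2} = \prod_{j\ge 1}\bigl(\det\nolimits' \Delta_{j,\C}^{(c)}\bigr)^{(-1)^{j+1}j} = \tau_X(\chi)^2,
\]
which is the desired identity. The only real obstacle is the bookkeeping of preferred bases and volume elements: one must verify that the canonical (up to sign) volume element on $C^q(K;V^\R)$ coming from the underlying real structure coincides with the one used in the definition of $\tau_X(\chi^\R)$, which is immediate from the basis $\{e_k,ie_k\}$ of $V^\R$ together with the preferred simplicial basis of $C^q(\tilde K;\Z)$, combined with the fact that unimodularity (and, if needed, $\chi(X)=0$) makes the Reidemeister torsion independent of the sign ambiguity and of the chosen volume element in $\det V$.
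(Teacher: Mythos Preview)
Your proof is correct and follows essentially the same route as the paper: both arguments reduce, via Lemma \ref{laplace1} and the identification $\Delta_{j,\R}^{(c)}=(\Delta_{j,\C}^{(c)})^\R$, to the linear algebra fact that restriction of scalars squares the determinant. The only minor difference is how that fact is established: the paper proves the general identity $\det A^\R=|\det A|^2$ via the Jordan normal form and the $1$-dimensional case, whereas you exploit self-adjointness of the Laplacian to argue directly that each complex eigenvector $v$ contributes two real eigenvectors $v,iv$ with the same (real) eigenvalue, hence $\det'\Delta_{j,\R}^{(c)}=(\det'\Delta_{j,\C}^{(c)})^2$.
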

\begin{proof}
The $p$-simplexes of $K$ form a basis of $C^p(\tilde K;\Z)$ as a $\Z[\Gamma]$
module. With respect to these bases, the 
coboundary operator $\tilde d_p\colon C^p(\tilde K;\Z)\to C^{p+1}(\tilde K;\Z)$
is given by a matrix $(a_{ij})$ with $a_{ij}\in\Z[\Gamma]$. Let $\chi\colon
\Z[\Gamma]\to \End(V)$ be defined by 
\[
\chi\left(\sum_j n_j\gamma_j\right)=\sum_j n_j\chi(\gamma_j),\quad n_j\in\Z,\;
\gamma_j\in\Gamma.
\]
It follows that the coboundary operator $d_p^\chi \colon C^p(K;V)\to 
C^{p+1}(K;V)$ is given by the matrix $(\chi(a_{ij})$. Choose an inner product 
in $V$. Let  
\[
\Delta_p^\chi=(d_p^\chi)^*\circ d_p^\chi+d_{p-1}^\chi\circ(d_{p-1}^\chi)^*.
\]
Since $\chi$ is acyclic, $\Delta_p^\chi$ is invertible.   
By Lemma \ref{laplace1} the Reidemeister torsion is given by
\[
\tau_X(\chi)=\prod_{j=1}^n\left(\det \Delta_j^\chi\right)^{(-1)^{j+1}j/2}.
\]
A similar formula holds for $\tau_X(\chi^\R)$. Thus in order to prove the lemma
it suffices to prove that $\det \Delta_j^{\chi^\R}=(\det\Delta_j^\chi)^2$,
 $j=1,...,n$. For a linear operator $A\colon W\to W$ in a complex vector space
$W$ let $A^\R$ denote the corresponding real operator in $W^\R$. By construction
we have $\Delta_j^{\chi^\R}=(\Delta_j^\chi)^\R$. Thus it suffices to prove that
for any linear operator $A$ in a complex vector space we have $\det A^\R=
|\det A|^2$. Using the Jordan normal form of $A$, the problem is reduced to
the one-dimensional case. Let $A\colon \C\to\C$ be the multiplication by
$z=u+iv$. Then $A^\R=\begin{pmatrix}u&-v\\v&u\end{pmatrix}$ and therefore
$\det A^\R=u^2+v^2=|z|^2=|\det A|^2$ which proves the lemma.
\end{proof}

\section{Arithmetic groups}\label{arithmgr}

In this section we recall some facts about quaternion algebras and 
arithmetic subgroups of $\SL(2,\C)$ derived from quaternion algebras. 

Let $F$ be a field of characteristic zero. 
Every quaternion algebra
over $F$ can be described as $4$-dimensional $F$-vector space with basis
$1,i,j,k$ satisfying the following relations:
\[
i^2=a,\quad j^2=b,\quad ij=k,\quad ji=-k,
\]
where $a,b\in F^\times$. This algebra will be denoted
by $H(a,b;F)$ or simply $H(a,b)$. Let $x\in H\mapsto\overline x\in H$ 
be the involution defined by
\[
\overline{x_0+x_1i+x_2j+x_3k}=x_0-x_1i-x_2j-x_3k.
\]
Then the norm  and the trace in $H$ are defined by
\[
N(x)=x\overline x,\quad \Tr(x)=x+\overline x,\quad x\in H.
\]

It follows from Wedderburn's structure theorem for simple algebras that
a quaternion algebra $H$ over $F$ is either a division algebra or is 
isomorphic
to $M_2(F)$ (see \cite[Theorem II.2.7]{Lam}, \cite[Theorem 2.1.7]{MR}). In
the latter case $H$ is called split over $F$. 
 
Let $D$ be a division algebra over $F$ of degree $d^2$, $d\le 2$. 
Let $m=2/d$. There is an associated almost simple
semisimple algebraic group $\SL_m(D)$. It is defined as the functor from 
$F$-algebras to groups which assigns to an $F$-algebra $R$ the group
\[
\SL_m(D)(R):=\{x\in M_m(D\otimes_F R)\colon N(x)=1\}.
\] 
The algebraic groups $\SL_m(D)$ are the forms of $\SL_2$ over $F$
\cite[Theorem 27.9]{Min}. If $d=1$, we have $D=F$ and
$\SL_2(D)=\SL_2$. If $d=2$, $D$ is a quaternion division algebra. 

We consider $\SL_2$ as algebraic group over $F$. 
For $n\in\N$ let $V(n):=S^n(F^2)$ be the $n$-th symmetric power of $F^2$ and let
$\Sym^n\colon\SL_2\to \GL(V(n))$ be the $n$-th symmetric power of the standard
representation $\rho\colon \SL_2\to \GL_2$ of $\SL_2$. The following lemma is a 
special case of \cite[Theorem 3.3]{Tit}. For the convenience of the reader we
include the proof which  was kindly communicated to us by Skip Garibaldi.
\begin{lem}\label{twistform2}
Let $G^\prime$ be a form of $\SL_2/F$. For every even $n$  
there exists a
$F$-rational representation of $G^\prime$ on $V(n)$ which 
is equivalent to $\Sym^n$ over $\overline F$. 
\end{lem}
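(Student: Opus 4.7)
The plan is to descend $\Sym^n$ through $\PGL_2$ and then trivialize the resulting obstruction using Hilbert 90. The key observation that makes the statement hinge on the parity of $n$ is that the center $\mu_2 = \{\pm I\} \subset \SL_2$ acts on $V(n)=S^n(F^2)$ by the character $(-1)^n$, which is trivial precisely when $n$ is even. Hence for even $n$ the representation $\Sym^n$ factors through the quotient $\PGL_2 = \SL_2/\mu_2$, producing an $F$-rational morphism of algebraic groups
\[
\pi_n \colon \PGL_2 \longrightarrow \GL(V(n)).
\]

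Next, I would exhibit $G'$ explicitly as an inner twist. Because the Dynkin diagram of type $A_1$ has no nontrivial symmetry, $\SL_2$ admits no outer automorphisms, so every form $G'$ of $\SL_2/F$ is inner and is classified by a class in $H^1(\Gal(\overline F/F),\PGL_2(\overline F))$. I fix a representing cocycle $(c_\sigma)$ together with an $\overline F$-isomorphism $\phi\colon G'\otimes_F\overline F \xrightarrow{\sim}\SL_2\otimes_F\overline F$ satisfying $\sigma(\phi)\phi^{-1}=\Int(c_\sigma)$, where $c_\sigma\in\PGL_2(\overline F)=\Aut(\SL_2)(\overline F)$ acts by the corresponding inner automorphism of $\SL_2$.

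The crucial step is an application of Hilbert 90. Because $\pi_n$ is defined on $\PGL_2$ (and not merely on $\SL_2$), applying $\pi_n$ to the cocycle $(c_\sigma)$ produces a well-defined cocycle with values in $\GL(V(n))(\overline F)$; this is the precise point at which the evenness of $n$ is used, since for odd $n$ the element $c_\sigma$ lives only in $\PGL_2(\overline F)$ and cannot be sensibly inserted into $\Sym^n$. By Hilbert 90 in its Noether--Speiser form, $H^1(\Gal(\overline F/F),\GL(V(n))(\overline F))$ is trivial, so there exists $a\in\GL(V(n))(\overline F)$ with
\[
\pi_n(c_\sigma) \;=\; a\cdot \sigma(a)^{-1} \qquad\text{for all } \sigma\in\Gal(\overline F/F).
\]

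Finally, I would define the desired representation $\mu_n\colon G'\to\GL(V(n))$ on $\overline F$-points by $\mu_n(g):=a^{-1}\pi_n(\phi(g))\,a$ and verify $F$-rationality by direct substitution. The two identities $\sigma(\phi)=\Int(c_\sigma)\circ\phi$ and $\pi_n(c_\sigma)=a\sigma(a)^{-1}$ combine so that the conjugations telescope, giving $\sigma(\mu_n(g))=\mu_n(\sigma(g))$ for the twisted Galois action on $G'$. By construction $\mu_n$ differs from $\Sym^n\circ\phi$ by conjugation by $a$, so it is equivalent to $\Sym^n$ over $\overline F$, as required. The only genuine subtlety is the well-definedness of $\pi_n(c_\sigma)$, which is exactly what the evenness of $n$ provides; everything else is a standard descent-plus-Hilbert-90 argument.
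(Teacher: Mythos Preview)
Your proof is correct and follows essentially the same route as the paper's: both use that $\SL_2$ has only inner automorphisms to obtain a $\PGL_2(\overline F)$-valued cocycle, push it into $\GL(V(n))(\overline F)$ via $\Sym^n$ (which is well defined precisely because the center acts trivially for even $n$), trivialize it by Hilbert~90, and then conjugate by the trivializing element to descend the representation to $F$. The only cosmetic difference is that you phrase the middle step as ``$\Sym^n$ factors through $\pi_n\colon\PGL_2\to\GL(V(n))$'' whereas the paper lifts each $c_\sigma$ to $\SL_2(\overline F)$ and notes the image under $\Sym^n$ is independent of the lift.
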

\begin{proof}
First we observe that the Dynkin diagram $A_1$ has no nontrivial automorphisms.
Therefore $\SL_2$ has no outer automorphims and 
\begin{equation}\label{innauto}
\Aut(\SL_2)=\PGL_2.
\end{equation}
Hence the  forms of $\SL_2$ are classified by the Galois cohomology set 
$H^1(\overline F/F,\PGL_2)$ (see \cite[pp. 181]{Min}). Let 
$\mu\colon \SL_2(\ol F)\to G^\prime(\ol F)$ be an isomorphism 
(which exists by assumption). For $\sigma\in \Gal(\overline F/F)$ put 
\begin{equation}\label{innauto1}
\varphi_\sigma=\mu^{-1}\circ\sigma\circ\mu\circ\sigma^{-1}.
\end{equation}
Then $\varphi_\sigma$ is an automorphism of $\SL_2/\overline{F}$. By 
\eqref{innauto} there exists $a_\sigma\in\PGL_2(\overline F)$ such that
$\varphi_\sigma=\Int(a_\sigma)$. As in \cite[Theorem 26.10]{Min} it follows that
$\sigma\in\Gal(\overline F/F)\mapsto a_\sigma\in\PGL_2(\overline F)$ is the 
cocycle that represents  $G^\prime$.  
Let $a^\prime_\sigma\in SL_2(\overline F)$ be any lift 
of $a_\sigma$. Let $n\in \N$ be even. Then $\Sym^n$ is trivial on
the center of $\SL_2$ and therefore the map
\[
\sigma\in\Gal(\overline{F}/F)\mapsto\Sym^n(a^\prime_\sigma)\in 
\GL(V(n)\otimes\overline{F})
\] 
is a well-defined cocycle. By Hilbert's theorem 90 we have 
$H^1(\overline F/F,\GL_N)=1$ for every $N$. Hence there exists 
$x\in \GL(V(n)\otimes \overline F)$ such that 
\begin{equation}\label{cocycle}
\Sym^n(a^\prime_\sigma)=x^{-1}\sigma(x),\quad \forall\sigma
\in\Gal(\overline F/F).
\end{equation}
Define the map $\rho_n\colon G^\prime\to \GL(V(n))$ by
\begin{equation}\label{twistrep}
\rho_n:=\Int(x)\circ \Sym^n\circ \mu.
\end{equation}
This is a representation of $G^\prime$ over $\ol F$ which over $\ol F$ is 
equivalent to $\Sym^n$.  By \eqref{innauto1} the action of 
$\Gal(\ol F/F)$ on $G^\prime(\ol F)$ is given by
\begin{equation}\label{galact}
\mu(\sigma(g))=\Int(a_\sigma)\cdot \sigma(\mu(g)),\quad 
g\in G^\prime(\ol F),\:\sigma\in\Gal(\ol F/F).
\end{equation}
Using \eqref{cocycle}, \eqref{twistrep} and \eqref{galact}, it follows that for
all $\sigma\in\Gal(\ol F/F)$ and $g\in G^\prime(\ol F)$ we get
\[
\begin{split}
\rho_n(\sigma(g))= &\Int(x)\Sym^n\left(\Int(a^\prime_\sigma)\sigma(\mu(g))\right)
=\Int(x\Sym^n(a^\prime_\sigma))\sigma(\Sym^n\left(\mu(g)\right))\\
&=\Int(\sigma(x))\sigma\left(\Sym^n(\mu(g))\right)
=\sigma(\Int(x)\Sym^n(\mu(g)))=\sigma(\rho_n(g)).
\end{split}
\]
Thus $\rho_n$ commutes with the action of $\Gal(\ol F/F)$ and hence is defined
over $F$. 
\end{proof}

Now let $F$ be an imaginary quadratic number field and let $\cO_F$ be the ring
of integers of $F$. Fix an embedding $F\subset \C$. As explained above, every
quaternion algebra over $F$, which is not isomorphic to $M_2(F)$, is a division
algebra. This is the case which is of interest for us. So let $D$ be a 
quaternion division algebra over $F$. Put
\[
D^1=\{x\in D\colon N(x)=1\},\quad D^0=\{x\in D\colon \Tr(x)=0\}.
\]
Let 
\[
G=R_{F/\Q}\SL_1(D)
\] 
be the algebraic group obtained from $\SL_1(D)$ by restriction of scalars. 
Then $G$ is defined over $\Q$. We have
\begin{equation}\label{restrscal1}
G(\Q)\cong D^1,\quad G(\R)\cong\SL_1(D)(F\otimes_\Q\R)=(D\otimes_F\C)^1
\end{equation}
and there is an isomorphism of $\C$-algebras
\begin{equation}\label{isodiv}
\varphi\colon (D\otimes_F\C)^1\stackrel{\cong}\longrightarrow\SL(2,\C).
\end{equation}
Let $\ho$ be an order in $D$. Recall that this means that $\ho$ is a finitely 
generated $\cO_F$-module which
contains an $F$-basis of $D$ and which is also a subring of $D$.
Let 
\[
\ho^1=\{x\in\ho\colon N(x)=1\},\quad \ho^0=\{x\in\ho\colon \Tr(x)=0\}.
\]
Then $\ho^1\subset D^1$ and $\ho^0\subset D^0$ is a lattice which is invariant
under $\ho^1$ with respect to the adjoint action of $\SL_1(D)$ on $D^0$.
Thus by \eqref{restrscal1} it follows that $\ho^1$ corresponds to an arithmetic 
subgroup $\Gamma^1\subset G(\Q)$. Put
\[
\Gamma=\varphi(\ho^1)
\]
Then $\Gamma$ is a discrete subgroup of $\SL(2,\C)$. Such a group is called
an arithmetic subgroup derived from a quaternion algebra.  
The following lemma is a consequence of \cite[Theorem 10.1.2]{EGM}. 
\begin{lem}\label{cocompact}
Let $\Gamma\subset \SL(2,\C)$ be derived from a quaternion division algebra over
$F$. Then the quotient $\Gamma\bs\SL(2,\C)$ is compact. 
\end{lem}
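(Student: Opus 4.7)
The plan is to combine two standard ingredients: first, the Borel--Harish-Chandra finiteness theorem, which produces a discrete subgroup of $\SL(2,\C)$ of finite covolume; and second, the classical fact from the theory of Kleinian groups that a discrete subgroup of $\SL(2,\C)$ of finite covolume is cocompact if and only if it contains no parabolic elements. The entire weight of the argument will rest on using the division-algebra hypothesis on $D$ to rule out parabolic elements in $\Gamma$.

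First I would apply Borel--Harish-Chandra to the $\Q$-group $G = R_{F/\Q}\SL_1(D)$: since $G$ is semisimple it has no nontrivial $\Q$-rational characters, so the arithmetic subgroup $\Gamma^1 \subset G(\Q)$ defined by $\ho^1$ has finite covolume in $G(\R)$. Transporting along the isomorphism $\varphi\colon (D\otimes_F\C)^1 \to \SL(2,\C)$ from \eqref{isodiv}, the image $\Gamma = \varphi(\ho^1)$ is a discrete subgroup of $\SL(2,\C)$ of finite covolume. Next I would rule out parabolic elements in $\Gamma$. A nonidentity parabolic $\gamma \in \SL(2,\C)$ has trace $\pm 2$ and therefore satisfies $(\gamma \mp I)^2 = 0$ in $M_2(\C)$, with $\gamma \mp I \neq 0$. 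If $\gamma = \varphi(x)$ with $x \in \ho^1 \setminus \{\pm 1\}$, then since $\varphi$ is an isomorphism of $\C$-algebras identifying reduced trace and norm on $D$ with matrix trace and determinant, the element $x$ satisfies its reduced characteristic polynomial $T^2 \mp 2T + 1 = (T\mp 1)^2$ in $D$. Because the inclusion $D \hookrightarrow D\otimes_F\C$ is an injective ring map, the relation $(x\mp 1)^2 = 0$ holds already inside $D$, exhibiting $x \mp 1$ as a nonzero nilpotent in the division algebra $D$, which is impossible. Hence $\Gamma$ contains no parabolic elements, and the characterization of cocompact Kleinian groups delivers cocompactness.

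The only non-routine step is the elimination of parabolic elements, and even there the argument reduces to a single line once the division-algebra hypothesis is exploited; the remaining invocations of Borel--Harish-Chandra and the Kleinian-group characterization of cocompactness are standard. A cleaner variant, closer to the treatment in \cite{EGM}, would bypass Kleinian-group theory altogether by translating the parabolic criterion into the statement that $G$ is $\Q$-anisotropic (equivalently, that $\SL_1(D)$ is $F$-anisotropic); the same nilpotent argument verifies anisotropy, and Godement's compactness criterion then yields cocompactness directly. Either way, the essential content is that a division algebra contains no nonzero nilpotent elements.
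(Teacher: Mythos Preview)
Your argument is correct. The paper does not give a proof at all: it simply records that the lemma ``is a consequence of \cite[Theorem 10.1.2]{EGM}'' and moves on. What you have written is essentially the content of that reference, so there is no divergence of approach to discuss. Your second variant (show $\SL_1(D)$ is $F$-anisotropic via the no-nilpotents argument and then invoke Godement's compactness criterion) is exactly the standard line taken in \cite{EGM}; the first variant routes the same nilpotent computation through the Kleinian-group characterization of cocompact lattices instead, which is equivalent and equally valid. Either way the substantive point is the one you identified: a division algebra has no nonzero nilpotents, so $\Gamma$ contains no nontrivial unipotent/parabolic elements.
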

By  imposing further conditions on $D$, one can achieve that $\Gamma$ is 
torsion free \cite[Theorem 10.1.2]{EGM}. Alternatively, we can pass to a 
normal subgroup of $\Gamma$ of finite index which is torsion free. 

Let $V_1(n)=R_{F/\Q}(V(n))$. Then $V_1(n)$ is a $\Q$-vector space and it follows
from Lemma \ref{twistform2} that for every even $n$ there exist a $\Q$-rational
representation
\[
\rho_n\colon G\to\GL(V_1(n))
\]
which over $\R$ is equivalent to $\Sym^n$ of $\SL_2$. Since $\Gamma^1$ is an
arithmetic subgroup of $G(\Q)$, there exists a lattice $L\subset V_1(n)$ which
is stable under $\Gamma^1$ (see \cite[Proposition 28.9]{Min}). Using the 
isomorphism \eqref{isodiv} this implies the following proposition.
\begin{prop}\label{lattice}
Let $\Gamma\subset\SL(2,\C)$
be an arithmetic subgroup derived from a quaternion  algebra over an imaginary
quadratic number field.
Then  for each even $n\in\N$ there exists a lattice
$M_n\subset S^n(\C^2)$ which is stable under $\Gamma$ with respect to $\Sym^n$.
\end{prop}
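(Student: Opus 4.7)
The plan is to combine Lemma \ref{twistform2} (applied to the $F$-form $\SL_1(D)$ of $\SL_2$) with the standard existence of $\Gamma^1$-stable lattices for any $\Q$-rational representation of $G$, and then transport the resulting lattice to $S^n(\C^2)$ along the splitting isomorphism $\varphi$ of \eqref{isodiv}.

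First I would apply Lemma \ref{twistform2} to produce the $F$-rational representation $\mu_n \colon \SL_1(D) \to \GL(V(n))$, equivalent to $\Sym^n$ over $\overline F$, and then take restriction of scalars to obtain the $\Q$-rational representation $\rho_n \colon G \to \GL(V_1(n))$ on $V_1(n) := R_{F/\Q}V(n)$. Since $\Gamma^1 \subset G(\Q)$ is an arithmetic subgroup, \cite[Proposition 28.9]{Min} produces a $\Z$-lattice $L \subset V_1(n)$ of full $\Q$-rank which is stable under $\Gamma^1$.

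The remaining task is to identify $L$ with a $\Gamma$-stable lattice in $S^n(\C^2)$. Because $F$ is imaginary quadratic, the chosen embedding $F \subset \C$ gives $F \otimes_\Q \R \cong \C$, and consequently
\[
V_1(n) \otimes_\Q \R \;=\; V(n) \otimes_F (F \otimes_\Q \R) \;\cong\; V(n) \otimes_F \C \;=\; S^n(\C^2).
\]
Under this identification $\rho_n \otimes_\Q \R$ becomes the $\C$-base change of $\mu_n$, which by Lemma \ref{twistform2} is in turn conjugate to $\Sym^n$ of $\SL(2,\C)$ via the splitting $\varphi$. Setting $M_n \subset S^n(\C^2)$ equal to the image of $L$ under this chain of identifications then gives a $\Z$-lattice of full real rank which is stable under $\Gamma = \varphi(\ho^1)$ with respect to $\Sym^n$.

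The only real subtlety lies in bookkeeping: one must check that restriction of scalars, tensoring with $\R$, and the splitting $\varphi$ are all compatible with the Galois-cocycle construction of $\mu_n$ in the proof of Lemma \ref{twistform2}. Concretely this reduces to verifying that the conjugating element $x$ of \eqref{cocycle}, restricted to $\C \subset \overline F$, intertwines the $\C$-base change of $\mu_n$ with $\Sym^n \circ \varphi$. Once this compatibility is confirmed, the proposition follows immediately.
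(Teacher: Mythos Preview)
Your proposal is correct and follows essentially the same route as the paper: apply Lemma \ref{twistform2} to get an $F$-rational representation of $\SL_1(D)$ on $V(n)$, take restriction of scalars to obtain a $\Q$-rational representation $\rho_n$ of $G$ on $V_1(n)=R_{F/\Q}V(n)$, invoke \cite[Proposition 28.9]{Min} to produce a $\Gamma^1$-stable lattice $L\subset V_1(n)$, and then transport $L$ to $S^n(\C^2)$ via the identification $V_1(n)\otimes_\Q\R\cong S^n(\C^2)$ and the splitting $\varphi$ of \eqref{isodiv}. The paper records exactly these steps in the paragraph immediately preceding the proposition; your added remarks about the compatibility of the cocycle $x$ with the base change to $\C$ simply make explicit what the paper leaves implicit.
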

We add some remarks about the classification of quaternion algebras over a
number field $F$. For details see \cite[Chap. III]{Vig}. Let $H$ be a 
quaternion algebra over $F$. Given a place $v$ of $F$, let
\[
H_{v}=H\otimes_F F_{v}.
\]
Over $\C$ every quaternion algebra splits, i.e., there
is an $\C$-algebra isomorphism 
\[
\varphi\colon H\otimes_F\C\stackrel{\cong}\longrightarrow M_2(\C).
\]
By the Frobenius theorem, a quaternion algebra over $\R$ is either split or
isomorphic to the Hamiltonian quaternions. If $\pg$ is a finite place of $F$,
then, up to isomorphism, there is a unique quaternion division algebra  
over $F_{\pg}$ \cite[Theorem II.1.1]{Vig}. $H$ is called ramified at a given
place $v$, if $H_v$ is a division algebra. If $H=H(a,b;F)$, $a,b\in F^\times$, 
the behavior at $v$ is determined by the Hilbert symbol $(a,b)_v\in\{\pm1\}$. 
$H_v$ is split iff 
$(a,b)_v=1$. It follows from the Hasse-Minkowski principle that 
$H$ splits over $F$ iff $H_v$ splits for every place $v$ of $F$
\cite[Theorem 2.9.6]{MR}. For $H=H(a,b;F)$ this is equivalent to $(a,b)_v=1$ 
for all places $v$. Furthermore the number of places where $H$ is ramified
is even. Denote by $V_f(F)$ and $V_\R(F)$ the set of finite places and real
places of $F$, respectively. Then for every finite set $S\subset V_f(F)\cup 
V_\R(F)$ of even cardinality there is a quaternion algebra $H$ over $F$ 
which is unique up to $F$-algebra isomorphism, such that the set of places
where $H$ is ramified is equal to $S$. 

Thus for an imaginary quadratic number field $F$, we can pick any nonempty
finite set $S$ of finite  places  $\pg$ with $|S|$ even. Then up to 
isomorphism, there is a unique quaternion division algebra $D$ over $F$
which is ramified exactly at the places $\pg\in S$.

\section{Bounds for torsion in cohomology}\label{bounds}

We now prove \eqref{estim1} of theorem \ref{thm-asymp}, which states that the 
contribution of the terms with cohomological degree 1 and 3 to the alternating 
sum  \eqref{asymp2} is small.  To begin with the $H^3$ term, it is required to 
show that

\begin{equation}
\label{slow}
\ln |H^3( X, \cM_{2k} )| \ll k \ln k
\end{equation}
uniformly over all choices of lattice $M_{2k}$.  We first apply the 
isomorphism $H^3( X, \cM_{2k} ) \simeq (M_{2k})_\Gamma$, which follows from 
computing $H^3$ using a triangulation of $X$ and then observing that this is 
equivalent to computing $H_0(\Gamma, M_{2k})$ using the dual triangulation.  
We shall then bound $(M_{2k})_\Gamma$ by working locally.  Let 
$V(2k)=S^{2k}(F^2)$.
For each prime $\pg$ of $F$, let $V_\pg(2k)$ be the completion of $V(2k)$ at 
$\pg$, and let $M_{2k, \pg}$ be the completion of the image of $M_{2k}$.  
$M_{2k, \pg}$ is a $\Gamma$-stable lattice, and we have

\begin{equation*}
\ln | (M_{2k})_\Gamma | = \sum_\pg \ln | (M_{2k, \pg})_\Gamma |.
\end{equation*}
We divide the primes of $F$ into two sets, which we call 
unramified and ramified.  The first set contains those at which the division 
algebra $D$ is unramified and the closure of $\Gamma$ in 
$D^1_\pg \simeq \SL_2(F_\pg)$ is isomorphic to the standard maximal compact, 
and the second contains the remainder.  The following lemma implies that the 
unramified primes whose residue characteristic is greater than $2k$ make no 
contribution to the sum.

\begin{lem}
\label{largep}
If $\F_q$ is the field with $q = p^j$ elements, then the $d$-th symmetric 
power representation 
of $\SL_2(\F_q)$ over $\F_q$ is irreducible for $d < p$. 
\end{lem}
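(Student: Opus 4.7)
The plan is to show that any nonzero $\SL_2(\F_q)$-invariant subspace $W \subseteq V := S^d(\F_q^2)$ must equal $V$, realizing $V$ as the space of homogeneous polynomials of degree $d$ in variables $x, y$ over $\F_q$ with the standard $\SL_2$-action. Let $U \subset \SL_2(\F_q)$ be the upper triangular unipotent subgroup; it acts by $x \mapsto x$, $y \mapsto y + ax$ for $a \in \F_q$. Since $|U| = q$ is a power of $p$ and the module is in characteristic $p$, the standard fact that a $p$-group acting on a nonzero module in characteristic $p$ has nonzero fixed vectors gives $W^U \neq 0$.

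The first main step is to identify $V^U$ explicitly as the one-dimensional subspace spanned by $x^d$. Writing $P(x,y) = \sum_{i=0}^d c_i x^{d-i} y^i$ and imposing $P(x, y+ax) = P(x,y)$ for every $a \in \F_q$, one expands and compares coefficients of $y^j$ to obtain, for each $j$,
\[
\sum_{i>j} c_i \binom{i}{j} a^{i-j} = 0 \qquad \text{for all } a \in \F_q.
\]
The left side is a polynomial in $a$ of degree at most $d - j < p \le q$, so it cannot have $q$ distinct roots unless it is identically zero; hence $c_i \binom{i}{j} = 0$ for every $i > j$. Taking $j = 0$ forces $c_1 = \cdots = c_d = 0$, so $V^U = \F_q \cdot x^d$, and in particular $x^d \in W$.

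To finish, I would show that the $\SL_2(\F_q)$-orbit of $x^d$ spans $V$. Applying the lower unipotent element $\begin{pmatrix} 1 & 0 \\ a & 1 \end{pmatrix}$ sends $x^d$ to $(x + ay)^d = \sum_{i=0}^d \binom{d}{i} a^i x^{d-i} y^i$. Since $d < p$, the factorial $d!$ is coprime to $p$, so every $\binom{d}{i}$ with $0 \le i \le d$ is a unit in $\F_q$. Choosing $d + 1$ distinct elements $a_0, \ldots, a_d \in \F_q$ (possible because $d + 1 \le p \le q$), the matrix $\bigl( \binom{d}{i} a_k^i \bigr)_{0 \le k, i \le d}$ expressing the vectors $(x + a_k y)^d$ in the monomial basis factors as a diagonal matrix of units times a Vandermonde matrix with distinct nodes, hence is invertible. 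Thus the orbit of $x^d$ already spans $V$, giving $W = V$. The only real obstacle is the identification of $V^U$, which is where the hypothesis $d < p$ is essential (and where the argument breaks down for $d \ge p$); the remainder is a routine Vandermonde/binomial computation.
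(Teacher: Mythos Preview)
Your proof is correct and shares its skeleton with the paper's: both arguments show that any nonzero invariant subspace $W$ must contain $x^d$ via the action of upper unipotents, and then that lower unipotents applied to $x^d$ generate all of $V$. The execution differs, however. The paper takes a single nontrivial upper unipotent $N$ and a single lower unipotent $\overline{N}$ and simply observes that, in the monomial basis, $\rho(N)-I$ and $\rho(\overline{N})-I$ are strictly upper (resp.\ lower) triangular with every entry immediately off the diagonal nonzero (these entries are $1,2,\ldots,d$, which are units because $d<p$); from this alone it follows at once that the only common invariant subspaces are $\{0\}$ and $V_d$. Your route---the $p$-group fixed-point lemma to obtain $W^U\neq 0$, a coefficient-matching/degree argument to pin down $V^U=\F_q\cdot x^d$, and a Vandermonde determinant with $d+1$ lower unipotents to span---reaches the same conclusion with more apparatus. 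The paper's version is shorter and uses only a single element from each unipotent subgroup; yours has the merit of making every use of the hypothesis $d<p$ completely explicit (nonvanishing of $\binom{d}{i}$, the degree bound on the polynomial in $a$, and the existence of $d+1$ distinct field elements), though note that it enters in the spanning step as well as in the identification of $V^U$, not only in the latter as your final remark suggests.
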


\begin{proof}
Denote this representation by $(\rho, V_d)$, and let $N$ and $\overline{N}$ be 
nontrivial upper and lower unipotent elements of $\SL_2(\F_q)$.  When 
$\rho(N) - I$ and $\rho(\overline{N}) - I$ are expressed in the standard 
monomial basis of $V_d$ they are strictly upper (resp. lower) triangular, and 
all their entries in the spaces lying immediately above (resp. below) the 
diagonal are nonzero.  It follows that the only subspaces of $V_d$ which are 
invariant under both of these operators are $\{ 0 \}$ and $V_d$.
\end{proof}

Because $F$ was 
imaginary quadratic, lemma \ref{largep} allows us to assume that 
$N\pg \le 4k^2$.  The following proposition gives a 
bound for $| (M_{2k, \pg})_\Gamma |$ at unramified primes which is uniform in 
$\pg$ and $M_{2k}$, and gives us the required bound (\ref{slow}) when summed 
over those $\pg$ with $N\pg \le 4k^2$.  For ramified primes we shall adapt the 
proof of the 
proposition to give bounds which are only uniform in the lattice, but this 
will be sufficient for our purposes as the number of such primes is bounded 
independently of $k$.

\begin{prop}
\label{local}
Let $K$ be a $p$-adic field, $\cO \subset K$ the ring of integers in $K$, 
$\varpi$ a uniformiser of $\cO$, and $q = | \cO / \varpi |$.  Consider the 
$2k$-th symmetric power representation of $G = \SL(2,\cO)$ on a vector space 
$V$ of dimension $2k+1$.  If $L \subset V$ is any $G$-stable lattice and 
$L/L'$ is the largest $G$-invariant quotient of $L$, then we have
\begin{equation*}
\ln_q | L : L' | \ll k/q
\end{equation*}
where the implied constant is absolute, i.e. independent of $L$ and $K$.
\end{prop}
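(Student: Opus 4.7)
The plan is to exhibit explicit relations in $L'$ using specific elements of $G = \SL(2,\cO)$, enough to force $|L/L'| \le q^{Ck/q}$ for an absolute constant $C$. Since $\SL(2,\cO)$ is generated by its upper and lower unipotent subgroups $N(\cO)$ and $\overline{N}(\cO)$ (via the Bruhat decomposition, the diagonal torus $T(\cO)$ and the Weyl element are both obtainable as products of unipotents), the submodule $L'$ equals the $\cO$-span of $(N(s)-I)\ell$ and $(\overline{N}(s)-I)\ell$ for $s \in \cO$ and $\ell \in L$. In particular, $(D-I)L \subset L'$ for every $D \in T(\cO)$.

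The heart of the argument is a torus analysis. Let $\omega \in \cO^\times$ be a Teichm\"uller lift of a generator of $\F_q^\times$, so $\omega^{q-1}=1$. Over $K$ the representation $V$ decomposes into weight spaces $Kv_i$ with $D_\omega$ acting by $\omega^{2(k-i)}$, and whenever $(q-1) \nmid 2(k-i)$, the scalar $\omega^{2(k-i)}-1$ is a unit in $\cO$, forcing the weight-$2(k-i)$ component of $L$ into $L'$. The surviving weights form an arithmetic progression in $\{0,\ldots,2k\}$ of common difference $(q-1)/\gcd(q-1,2)$, hence of cardinality $O(k/q)$. For each surviving weight, writing $2(k-i) = (q-1)j$, I would use $t = 1+\varpi \in T(\cO)$: the valuation $v_\varpi((1+\varpi)^{(q-1)j}-1) = 1 + v_p(j)$ (in the unramified case) bounds the contribution of that weight to $\ln_q |L/L'|$ by $1+v_p(j)$. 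Summing yields $\sum_{0<|j|\le 2k/(q-1)} (1+v_p(j)) \ll k/q$. The central weight $i=k$ receives no constraint from the torus and must be treated separately, using the relation $(N-I)v_{k+1} = (k+1)v_k + \sum_{m<k} \binom{k+1}{m}v_m$: once the lower weights have been shown to lie in $L'$, this forces $(k+1)v_k \in L'$, contributing at most $\log_q(k+1)$.

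The main obstacle is uniformity in $L$: the argument above presumes a decomposition of $L$ into torus eigenlines, which holds over $K$ but generally fails over $\cO$. To resolve this, I would observe that the pro-$p$ subgroup $1+\varpi\cO$ of $T(\cO)$ acts trivially on $L/\varpi L$, so $T(\cO)$ acts on $L/\varpi L$ through the finite group $\mu_{q-1} \cong \F_q^\times$, whose representation theory over $\F_q$ is semisimple (since $p \nmid (q-1)$). This gives an honest weight decomposition of $L/\varpi L$, and by Nakayama one can lift this to an approximate decomposition of $L$, at a controlled cost absorbed into the absolute constant in the bound. An induction on the $\varpi$-adic length of the coinvariants $L/L'$ (using the exact sequence $0 \to \varpi L \to L \to L/\varpi L \to 0$) then transfers the weight-by-weight bound from $L/\varpi L$ to $L$.

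The step most likely to cause technical trouble is this last reduction from a general $L$ to a torus-adapted one: the factor of $\gcd(q-1,2)$ in the size of the surviving arithmetic progression, the wild behavior of $1+\varpi$ in the ramified case, and the $p = 2$ case where $\F_q^\times$ is trivial for $q=2$, all need to be absorbed into the constant without letting the bound degenerate beyond $Ck/q$. I expect the $p = 2$ case in particular to require a slightly different torus argument (for instance using $t = 1 + \varpi$ alone together with the Weyl element to swap the roles of $v_0$ and $v_{2k}$), but not to alter the final bound.
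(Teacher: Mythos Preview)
Your strategy matches the paper's: use torus elements to force most weight components of $L$ into $L'$, leaving a piece of rank $O(k/q)$ plus the central weight, then handle the central weight with unipotents. There is one genuine gap, and it is precisely at the step you flag as the ``main obstacle''.

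First, your Teichm\"uller step is actually cleaner than you seem to think. Because $\omega^{q-1}=1$ exactly and $q-1$ is prime to $p$, the Lagrange-interpolation idempotents for the $D_\omega$-eigenspace decomposition already have coefficients in $\cO$ (the differences $\omega^i-\omega^j$ are units), so $L$ and $L'$ genuinely split as $\bigoplus_\zeta L^{(\zeta)}$ over $\cO$, not just over $K$. No Nakayama lifting is needed here; you immediately get $L/L' \cong L^{(1)}/(L^{(1)}\cap L')$ with $L^{(1)}$ of rank $O(k/q)$.

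The gap is at the next stage, where you apply $t=1+\varpi$ to separate the surviving weights inside $L^{(1)}$. Now the eigenvalues of $D_t-1$ are all divisible by $\varpi$, there is no integral idempotent decomposition, and your sentence ``the valuation \ldots\ bounds the contribution of that weight to $\ln_q|L/L'|$'' is exactly the step that fails for a general lattice. Your proposed fix --- semisimplicity of $L/\varpi L$ under $\mu_{q-1}$, then induct on the $\varpi$-adic length of $L/L'$ via $0\to\varpi L\to L\to L/\varpi L\to 0$ --- is circular: the length of $L/L'$ is what you are trying to bound, and each layer can contribute up to $q^{O(k/q)}$, so after $\ell$ layers you only get $\log_q|L/L'|\ll \ell\cdot k/q$, useless without an independent bound on $\ell$. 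The paper handles this with a different device. It chooses a single $a\in\cO^\times$ that is simultaneously a primitive root mod $\varpi$ and satisfies $a^{q-1}\notin 1+\varpi^2\cO$, groups the weight spaces into $\cV_i$ according to the \emph{exact} valuation $i$ of the eigenvalue of $D_a-1$, sets $L_i=L\cap\bigoplus_{j>i}\cV_j$, and proves by a short coset-representative argument that
\[
\log_q|L_i:L_i'| \le (i+1)\dim\cV_{i+1} + \log_q|L_{i+1}:L_{i+1}'|.
\]
Since $\dim\cV_{i+1}\ll k/(qp^{i})$, summing over $i$ gives the $k/q$ bound without any integral splitting of $L$. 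This filtration-and-projection trick is the missing ingredient in your proposal.

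Your treatment of the central weight has the same issue in miniature: the relation $(N-I)v_{k+1}=(k+1)v_k+(\text{lower})$ only gives $(k+1)v_k\in L'$ once the lower-weight terms are known to lie in $L'$, which again presupposes the splitting. The paper instead acts by $N-1$ on the central vector $(xy)^k$ itself and then applies an explicit element of the $\cO$-algebra generated by $D_a$ (a Lagrange interpolation polynomial in $D_a$) to project onto a single neighbouring weight line; this introduces an extra factor $\prod_i(1-a^{2i})$ whose valuation is again $O(k/q)$, and a second application of $\overline{N}-1$ plus projection returns to the central line.
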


\begin{proof}
The proof proceeds in two steps.  First, choose $a \in \cO^\times$ to have the 
highest possible order in all the quotient groups 
$\cO^\times / ( 1 + \varpi^t \cO)$, or equivalently so that $a$ is a primitive 
root in $(\cO/\varpi)^\times$ and $a^{q-1} \notin 1+ \varpi^2 \cO$.  Define $T$ 
to be the diagonal matrix
\begin{equation*}
T = \left( \begin{array}{cc} a & 0 \\ 0 & a^{-1} \end{array} \right).
\end{equation*} 
We know that $(T-1)L \subset L'$, so it would suffice to bound the order of 
$L / (T-1)L$.  This is infinite, however, because $T$ fixes the vector 
$(xy)^k$.  Instead, we shall prove that if $V_0$ is the space spanned by 
$(xy)^k$ and we define $J$ and $J'$ to be the intersections of $V_0$ with $L$ 
and $L'$, we have
\begin{equation}
\label{Linduct}
\ln_q | L : L' | \ll \frac{k}{q} + \ln_q | J : J' |.
\end{equation}
The second step is to bound $|J : J'|$, for which we shall need to consider 
the action of the unipotents on $V$.

We carry out the first step by decomposing $V$ into a direct sum of subspaces 
$\cV_i$, for $i \in \Z_{\ge 0}$ and $i = \infty$.  For $i \in \Z_{\ge 0}$, we 
define $\cV_i$ to be the span of the eigenspaces of $T-1$ with eigenvalue 
$\lambda$ satisfying $\ord_\varpi \lambda = i$, and define 
$\cV_\infty = \text{span} \{ (xy)^k \}$.  In other words, if $V_t$ is the span 
of the monomial $x^{k+t} y^{k-t}$, we define

\begin{eqnarray*}
\cV_0 & = & \bigoplus_{ 2t \not\equiv 0\; (q-1) } V_t \\
\cV_i & = & \bigoplus_{ \substack{ 2t = j \; (q-1) p^{i-1} \\ 
j \not\equiv 0 \; (p) } } V_t \\
\cV_\infty & = & V_0.
\end{eqnarray*}
Note that for $i \ge 1$ we have the bound

\begin{equation*}
\dim \cV_i \le Ck / q p^{i-1}
\end{equation*}
for some absolute constant $C$.  Let $\Pi_i$ be the projection onto $\cV_i$, 
and define $\cV_i'$ by

\begin{equation*}
\cV'_i = \bigoplus_{ \substack{j > i, \\ j = \infty} } \cV_j
\end{equation*}
so that $\cV'_i$ is the span of the eigenspaces of $T-1$ with eigenvalue 
$\lambda$ satisfying $\ord_\pg \lambda > i$.  Let $L_i = L \cap \cV_i'$ and 
$L'_i = L' \cap \cV_i'$.  We shall prove (\ref{Linduct}) using induction on 
$L_i$, by establishing the inequalities

\begin{eqnarray}
\label{Linduct1}
| L : L' | & \le & | L_0 : L_0' |,\\
\label{Linduct2}
\ln_q | L_i : L_i' | & \le & Ck(i+1)/qp^i + \ln_q | L_{i+1} : L_{i+1}' |
\end{eqnarray}
for all $i \ge 0$.

To prove (\ref{Linduct1}), note that because $T$ commutes with all co-ordinate projections we have

\begin{equation*}
(T - 1) \Pi_0 L \subseteq \Pi_0 L' \subseteq \Pi_0 L,
\end{equation*}
and that all the above inclusions must be equalities because the determinant 
of $T-1$ restricted to $\cV_0$ is a unit of $\cO$.  To deduce (\ref{Linduct1}) 
from the equality $\Pi_0 L = \Pi_0 L'$, let $\{ v_i \}$ be a system of coset 
representatives for $L_0 / L_0'$, and let $x \in L$ be given.  Because 
$\Pi_0 L = \Pi_0 L'$, there exists $y \in L'$ such that $\Pi_0 y = \Pi_0 x$, 
and so $x-y \in L_0$.  Therefore there exists $v_i$ such that 
$x - y - v_i \in L_0'$, which implies $x \in L' + v_i$.

Applying the same argument to the lattices $L_i$ and $L_i'$ gives

\begin{equation*}
(T-1) \Pi_{i+1} L_i \subseteq \Pi_{i+1} L'_i \subseteq \Pi_{i+1} L_i,
\end{equation*}
and on taking determinants we have

\begin{equation*}
\ln_q | \Pi_{i+1} L_i : \Pi_{i+1} L'_i | \le \ln_q | \Pi_{i+1} L_i : 
(T-1)\Pi_{i+1} L_i | \le (i+1) {\dim \cV_{i+1}} \le Ck(i+1)/qp^i,
\end{equation*}
where $i+1$ appears because it is the $\varpi$-adic valuation of the 
eigenvalues of $T-1$ on $\cV_{i+1}$.  Repeating the above argument on coset 
representatives, we see that this implies (\ref{Linduct2}).
On summing (\ref{Linduct1}) and (\ref{Linduct2}) we obtain

\begin{eqnarray*}
\ln_q | L : L' | & \ll & \sum_{t=1}^i \frac{kt}{q p^{t-1} } 
+ \ln_q | L_i : L_i' | \\
& \ll & \frac{k}{q} + \ln_q | L_i : L_i' |,
\end{eqnarray*}
from which (\ref{Linduct}) follows by letting $i \rightarrow \infty$.\\

To bound $|J:J'|$, we may assume that $J$ is generated by $(xy)^k$, and act on 
this monomial by an upper triangular element $N \in \SL(2,\cO)$.  $(N-1)(xy)^k$ 
will then contain a nonzero term of the form $x^{k+1} y^{k-1}$, and using the 
endomorphism algebra generated by $T$ we may in fact show that $L'$ contains a 
monomial $\varpi^s x^{k+1} y^{k-1}$ with $s$ small.  Repeating this argument 
with a lower triangular matrix $\overline{N}$ gives the required bound on 
$|J:J'|$.  The statement we require about the endomorphism algebra generated 
by $T$ is the following:

\begin{lem}
For $\gamma \in \cO^\times$, let $E$ be the ring of endomorphisms of 
$\cO^{2k+1}$ generated over $\cO$ by the diagonal matrix $A$ with entries 
$1, \gamma, \ldots, \gamma^{2k}$.  Then for $0 \le j \le 2k$, $E$ contains 
$p_j(\gamma^j) \pi_j$ where $\pi_j$ is the projection onto the $j$th 
co-ordinate and $p_j(z)$ is the polynomial

\begin{equation*}
p_j(z) = \prod_{ \substack{ 0 \le i \le 2k, \\ i \neq j } } (z - \gamma^i).
\end{equation*}

\end{lem}

\begin{proof}

This follows easily by considering the matrix $p_j(A)$, whose only nonzero 
entry is $p_j(\gamma^j)$ in the $(j,j)$th co-ordinate.

\end{proof}

Consider the polynomial $x^k (y+x)^k - (xy)^k \in L'$.  Applying the lemma 
with the choice of $\gamma = a^2$ and the projection onto the space spanned 
by $x^{k+1} y^{k-1}$, we see that $L'$ contains
\begin{equation*}
k \prod_{ \substack{ -2k-2 \le 2i \le 2k-2,\\ i \neq 0 } } (1 - a^{2i} ) x^{k+1} 
y^{k-1} = u \varpi^\alpha x^{k+1} y^{k-1},
\end{equation*}
where $u$ is a unit of $\cO$ and $\alpha \in \Z$.  In the product above, at 
most $Ck/q$ terms are divisible by $\varpi$, at most $Ck/qp$ are divisible 
by $\varpi^2$ and so on, so that we may bound $\alpha$ by
\begin{eqnarray*}
\alpha & \ll & \ln_q k + \frac{k}{q} + \frac{k}{qp} + \ldots \\
& \ll & \frac{k}{q}.
\end{eqnarray*}
By applying the same argument to the monomial $\varpi^\alpha x^{k+1} y^{k-1}$, 
but now with an element of the opposite unipotent,  we see that 
$\varpi^\beta (xy)^k \in L'$ with $\beta \ll k/q$.  This gives the required 
bound on $|J : J'|$, and completes the proof.
\end{proof}

Using proposition \ref{local}, we see that the contribution to the order of 
$\log | (M_{2k})_\Gamma |$ from unramified primes is at most a constant times
\begin{equation*}
\sum_{N\pg \le 4k^2} \frac{k \ln N\pg}{ N\pg} \ll k \ln k.
\end{equation*}
As the number of ramified primes is bounded independently of $k$, in the 
remaining cases it suffices to prove bounds of the form 
$\ln |(M_{2k,\pg})_\Gamma| \ll k$ where the constant is allowed to depend on 
$\pg$.

First, consider a prime $\pg$ at which $D$ is split but the closure of 
$\Gamma$ in $D_\pg^1 \simeq \SL_2(F_\pg)$ is not isomorphic to  $\SL(2,\cO)$, 
and denote this closure by $G$.  As in the unramified case, let $V$ be the 
$2k$-th degree symmetric power representation of $\SL_2(F_\pg)$, let 
$L \subset V$ be any $G$-stable lattice, and $L/L'$ its largest $G$-invariant 
quotient.  As $G$ is open, we know it contains an element

\begin{equation*}
T = \left( \begin{array}{cc} a & 0 \\ 0 & a^{-1} \end{array} \right)
\end{equation*} 
for some $a \in \cO^\times$.  As in the unramified case, define $\cV_i$ to be 
the sum of the eigenspaces of $T-1$ with eigenvalue $\lambda$ satisfying 
$\ord_\varpi \lambda = i$, let $\cV_i'$ be the sum of the eigenspaces with 
$\ord_\varpi \lambda > i$, and let $J = L \cap \cV_\infty$ and 
$J' = L' \cap \cV_\infty$.  Becuase we have inequalities

\begin{equation*}
\dim \cV_i \le C(a) \frac{k}{p^i}
\end{equation*}
uniformly in $i$ and $k$, we may use these subspaces to perform the same 
reduction argument which bounds $|L : L'|$ in terms of $|J : J'|$ to obtain

\begin{equation*}
\log_q |L : L'| \le \log_q |J : J'| + O_a(k).
\end{equation*}

$G$ also contains upper and lower unipotent elements

\begin{equation*}
N = \left( \begin{array}{cc} 1 & n \\ 0 & 1 \end{array} \right) \quad 
\text{and} \quad \overline{N} = \left( \begin{array}{cc} 1 & 0 \\ n & 1 
\end{array} \right)
\end{equation*} 
for some $n \in \cO$, and arguing as in the unramified case we may use these 
to show that

\begin{eqnarray*}
\log_q |J : J'| & \ll_n &  \log_q k + \sum_{ \substack{ -2k-2 \le 2i \le 2k+2,\\ 
i \neq 0 } } \ord_\varpi (1 - a^{2i} ) \\
& \ll_{n,a} & \log_q k + \sum_{i \ge 0} \frac{k}{p^i} \\
& \ll_{n,a} & k.
\end{eqnarray*}
We therefore have an upper bound of $\log_q |L : L'| \ll k$, where the 
constant may depend on $\pg$.

Finally, suppose that $D$ is ramified at $\pg$.  We continute to denote the 
closure of $\Gamma$ in $D_\pg^1$ by $G$.  Using the adjoint representation, we 
may realise $D^1_\pg$ as an algebraic subgroup of $GL_3(F_\pg)$.  There is an 
open subgroup $U$ of $GL_3(F_\pg)$ such that the power series $\ln(1 + A)$ 
converges $p$-adically for $1 + A \in U$, from which we see that there is an 
open subgroup $U' \subset D_\pg^1$ on which we may define an inverse 
exponential map.  By applying this map to $U' \cap G$ we obtain a lattice $N$ 
in the Lie algebra of $D_\pg^1$ which annihilates $L/L'$.  We may then extend 
scalars to a field over which $D_\pg$ splits so that we are again dealing with 
$\SL_2$, and apply the method of proposition \ref{local} to obtain an upper 
bound of $\log_q |L:L'| \ll k$.  This completes the proof of (\ref{slow}).\\

In the case of $H^1$, we may apply the long exact sequence in cohomology 
associated to the short exact sequence

\begin{equation*}
0 \longrightarrow M_{2k} \overset{ \times d}{\longrightarrow} M_{2k} 
\longrightarrow M_{2k} / d \longrightarrow 0,
\end{equation*}
of which one segment is
\begin{equation*}
H^0( \Gamma, M_{2k} / d) \longrightarrow H^1( \Gamma, M_{2k}) 
\overset{ \times d}{\longrightarrow} H^1( \Gamma, M_{2k}).
\end{equation*}
Therefore the torsion in $H^1( \Gamma, M_{2k})$ is bounded by the limit of 
the $\Gamma$-invariants in $M_{2k} / d$ as $d$ becomes divisible by 
arbitrarily high powers of every prime.  We may again bound this by working 
locally, and so wish to bound the order of the module of $\Gamma$-invariants 
$(M_{2k,\pg} / \varpi^t M_{2k,\pg})^\Gamma$ for each $\pg$ and arbitrary $t$.  
If we let $L$ be the inverse image of 
$(M_{2k,\pg} / \varpi^t M_{2k,\pg})^\Gamma$ in $V_\pg(2k)$, we see that 
$\Gamma$ acts trivially on $L / \varpi^t M_{2k,\pg}$.  We may therefore apply 
our bounds for $L_\Gamma$ to obtain

\begin{equation*}
\ln | H^1( \Gamma, M_{2k}) | \ll k \ln k
\end{equation*}

as required.

\section{Proof of the main results}\label{main}

Let $X=\Gamma\bs \bH^3$ be a compact oriented hyperbolic $3$-manifold defined
by a cocompact torsion free discrete subgroup $\Gamma\subset\SL(2,\C)$.  For
$m\in\N$ let $\rp_m\colon\SL(2,\C)\to \GL(S^m(\C^2))$ be the $m$-th symmetric 
power of the standard representation of $\SL(2,\C)$.   
We regard $S^m(\C^2)$ as an 
$\R$-vector space.  Let $\rp_m^\R$ be the corresponding real 
representation. Let $E_m\to X$ be the flat vector bundle associated to
$\rp_m$ or $\rp_m^\R$, respectively. By \cite[Chapt. VII, Theorem 6.7]{BW} we
have $H^*(X,E_m)=0$. Therefore, the Reidemeister torsions $\tau_X(\rp_m)$ and
$\tau_X(\rp_m^\R)$ of $X$ with respect to the
restrictions to $\Gamma$ of $\rp_m$ and $\rp_m^\R$, respectively, are well 
defined. By Lemma \ref{realcompl} we have
\begin{equation}\label{realtor}
\tau_X(\rp_m^\R)=\tau_X(\rp_m)^2.
\end{equation}
Then it follows from 
\cite[Corollary 1.2]{Mu2} that
\begin{equation}\label{reidasymp2}
-\log\tau_X(\rp_m^\R)=\frac{\vol(X)}{2\pi} m^2+O(m)
\end{equation}
as $m\to\infty$. 

Now let $\Gamma\subset\SL(2,\C)$ be an arithmetic subgroup derived from a 
quaternion devision algebra over an imaginary quadratic number field $F$.
By Lemma \ref{cocompact} it is cocompact. By passing to a normal subgroup of 
finite index we can assume that $\Gamma$ is torsion free. 
Let $m\in\N$ be even. By Proposition 
\ref{lattice} there exists a lattice $M_{m}\subset S^{m}(\C^2)$ 
which is stable under $\Gamma$.
Let $\cM_{m}$ be the associated local system of free $\Z$-modules over $X$.  
Let $\cM_{m}(\R):=\cM_{m}\otimes\R$. 
This is the local system associated to the restriction of $\rho_{m}$ to
$\Gamma$. Hence $H^*(X,\cM_m(\R))\cong H^*(X,E_m)$ and it follows from the 
remark above that $H^*(X,\cM_m(\R))=0$.  
Therefore $H^*(X,\cM_{n})$ is a finite abelian group. Note that $H^0(X,\cM_n)
=M_n^\Gamma=0$. 
Denote by $|H^p(X,\cM_n)|$ the order of $H^p(X,\cM_n)$, $p=1,2,3$. 
Then by Proposition \ref{reidem1} we get
\begin{equation}\label{reidem2}
\tau_X(\rp_n^\R)=\prod_{p=1}^3|H^p(X,\cM_n)|^{(-1)^{p+1}}.
\end{equation}
Combining \eqref{reidasymp2} and \eqref{reidem2}, we obtain Theorem 
\ref{thm-asymp}.

\noindent
{\bf Remark}: It follows from \eqref{reidem2} that the quantity 
$\sum_{p=1}^3(-1)^p\ln |H^p(X,\cM_{2k})|$ is independent of the choice of 
lattice $M_{2k} \subset V(2k)$, but this may also be deduced in an elementary 
way from the long exact sequence in cohomology associated to any inclusion of 
lattices $M'_{2k} \subset M_{2k}$.  Let $T=M_{2k}/M'_{2k}$. The long exact 
sequence associated to
\begin{equation*}
0 \longrightarrow M'_{2k} \longrightarrow M_{2k} \longrightarrow T 
\longrightarrow 0
\end{equation*}
implies that
\begin{equation}
\label{latticeinvar}
\sum_{p=1}^3(-1)^p\ln |H^p(X,\cM_{2k})| - \sum_{p=1}^3(-1)^p
\ln |H^p(X,\cM'_{2k})| = \sum_{p=1}^3(-1)^p\ln |H^p(X,\cT)|.
\end{equation}
The groups $H^p(X,\cT)$ are the cohomology groups of a complex $\{ C^p(T) \}$, 
where $C^p(T)$ is the group of $T$-valued cochains of degree $p$ in some 
triangulation of $X$.  Because $X$ is three dimensional, its Euler 
characteristic $\chi(X)$ is zero and so we have
\begin{equation*}
\sum_{p=1}^3(-1)^p\ln |C^p(T)| = \chi(X)\log|T|=0.
\end{equation*}
This implies the same relation for the groups $H^p(X,\cT)$, and so 
\eqref{latticeinvar} is zero as required.

The proof of Theorem \ref{value1} follows from \cite[Theorem 1.5]{Mu2}. Let
$\rp\colon SL(2,\C)\to \GL(V)$ be an irreducible finite-dimensional complex
representation of $\SL(2,\C)$, regarded as real Lie group. 
The Ruelle zeta function $R_\rp(s)$ considered in \cite{Mu2}
is related to the zeta function $R(s;\rp)$ by 
\[
R(s;\rp)=R_\rp(s)^{-1}.
\] 
The restriction of $\rp$ to $\Gamma$ defines a flat vector bundle 
$E_\rp\to X$. By \cite[Lemma 3.1]{MM} it carries a canonical fibre metric 
$h$. Let $H^*(X,E_\rp)$ be 
the de Rham cohomology of $E_\rp$-valued differential forms. 
Let $\theta$ be the Cartan involution of $\SL(2,\C)$ with respect to $SU(2)$. 
Let $\rp_\theta:=\tau\circ\rp$. If  $\rp_\theta\not\cong\rp$, then it
follows from \cite[Chapt. VII, Theorem 6.7]{BW} that $H^*(X,E_\rp)=0$. 
Let $T_X(\rho)$ be the Ray-Singer analytic torsion of $X$ with respect
to the restriction of $\rp$ to $\Gamma$. Note that in order to define the 
analytic torsion, we
need to choose a fibre metric in $E_\rp$. However, since $H^*(X,E_\rp)=0$ and
the dimension of $X$ is odd, the analytic torsion is independent of the any 
fibre metric \cite[Corollary 2.7]{Mu1}, which justifies the notation.
By \cite[Theorem 1.5, 1)]{Mu2} the Ruelle zeta function
$R(s;\rp)$ is regular at $s=0$ and we have
\begin{equation}\label{ruellval1}
|R(0;\rp)|=T_X(\rp)^{-2},
\end{equation}
Furthermore, by \cite[Theorem 1]{Mu1}, the analytic torsion $T_X(\rp)$ equals
the Reidemeister torsion $\tau_X(\rp)$ of $X$ and $\rp|_\Gamma$. 
Let $\rho^\R$ be the real representation associated to $\rho$.
Together with
Lemma \ref{realcompl} we obtain
\begin{equation}\label{ruellreid1}
|R(0;\rp)|=\tau_X(\rp^\R)^{-1}.
\end{equation}
Now assume that $\rp_\theta=\rp$. Then $H^*(X,E_\rp)$ may  be nonzero.
It follows from \cite[Theorem 1.5, 2)]{Mu2} that
the order of $R(s;\rp)$ at $s=0$ is given by
\begin{equation}\label{order3}
\ord_{s=0}R(s;\rp)=2\sum_{p=0}^3 (-1)^p p\dim H^p(X,E_\rp)
\end{equation}
and the leading coefficient $R^*(0;\rp)$ of the Laurent expansion of 
$R(s;\rp)$ at $s=0$ equals the analytic torsion $T_X(\rp;h)^{-2}$, where 
$h$ is the canonical fibre metric $h$ on $E_\rp$. Let $\tau_X(\rp;h)$ be the 
Reidemeister torsion with 
respect to the $L^2$-inner product in $H^*(X,E_\rp)$ defined by the 
isomorphism with the space 
$\cH^*(X,E_\rp)$ of  $E_\rp$-valued harmonic forms. Using again 
\cite[Theorem 1]{Mu1} and Lemma \ref{realcompl}, we get
\begin{equation}\label{laurent3}
R^*(0;\rp)=\tau_X(\rp^\R;h)^{-1}.
\end{equation}
Now assume that $M\subset V$ is a lattice which is stable under $\Gamma$ with
respect to $\rp$. Let
$\cM$ be the associated local system of free $\Z$-modules and let $\cM(\R)=
\cM\otimes\R$. If $\rp_\theta\not\cong\rp$, we have $H^*(X,\cM(\R))=0$. 
Combining Theorem \ref{reidem1} and \eqref{ruellreid1} we get
\[
|R(0;\rp)|=\prod_{p=0}^3|H^p(X,\cM)_{\tors}|^{(-1)^p}.
\]
Now assume that $\rp_\theta=\rp$. Then $\rk H^p(X,\cM)= 2\dim H^p(X,E_\rp)$.
Let $\rp\neq 1$. Then it follows from \eqref{order3} that
\[
\ord_{s=0}R(s;\rp)=\sum_{p=1}^3 (-1)^p\ra H^p(X,\cM).
\]
Using  Theorem \ref{reidem1} and \eqref{laurent3}, we get
\[
R^*(0;\rp)=R(\cM)^{-1}\prod_{p=0}^3|H^p(X,\cM)_{\tors}|^{(-1)^p}.
\]
Finally, if $\rp=1$ it follows from \cite[Theorem 1.5]{Mu2} that the order
of $R(s;1)$ at $s=0$ equals $2\dim H^1(X,\R)-4$.

\end{document}